\newtheorem{theorem}{Theorem}[section]
\newtheorem{lemma}[theorem]{Lemma}
\newtheorem{proposition}[theorem]{Proposition}
\newtheorem{corollary}[theorem]{Corollary}
\theoremstyle{definition}
\newtheorem{example}[theorem]{Example}
\newtheorem{remark}[theorem]{Remark}
\theoremstyle{remark}
\numberwithin{equation}{section}
\def\CC{{\mathbb C}}
\def\KK{{\mathbb K}}
\def\ZZ{{\mathbb Z}}
\def\QQ{{\mathbb Q}}
\begin{document}
\title[Factorially graded rings and Cox rings]%
{Factorially graded rings and Cox rings}
\author[B.~Bechtold]{Benjamin Bechtold} 
\address{Mathematisches Institut, Universit\"at T\"ubingen,
Auf der Morgenstelle 10, 72076 T\"ubingen, Germany}
\email{bechtold@mail.mathematik.uni-tuebingen.de}

\begin{abstract}
Cox rings of normal varieties are factorially graded, i.e. homogeneous elements allow a unique decomposition into homogeneous factors. We study this property from an algebraic point of view and give a criterion which in a sense reduces it to factoriality. This will allow us to detect and construct Cox rings in a purely algebraic manner. 
\end{abstract}

\subjclass[2000]{13A02, 13F15, 14L30}

\maketitle

\section{Introduction}

Throughout this article, $K$ is an abelian group and $R$ a $K$-graded algebra without $K$-homogeneous zero divisors over a noetherian ring $A$. By $R^+$ we denote the multiplicative monoid of $K$-homogeneous elements of $R$.  Following \cite{Ar, Ha2}, we say that a non-zero $f \in R^+\setminus R^*$ is {\em $K$-prime\/} if $f | gh$ with $g,h \in R^+$ implies $f | g$ or $f | h$. The ring $R$ is said to be {\em factorially $K$-graded\/} if every non-zero $f \in R^+ \setminus R^*$ is a product of $K$-primes. In general, factorially graded rings need not be UFDs \cite{Ar, HaHe} and it is only in the case of torsion free grading groups that these notions are equivalent \cite{An}. 

Most notably, Cox rings of normal varieties over an algebraically closed field $\KK$ of characteristic zero are always factorially graded with respect to the natural grading by the class group. In fact, Cox rings are characterized in algebraic terms as follows  \cite{Ar, ArDeHaLa, BeHa, Ha2}: A finitely generated algebra $R$ graded by a finitely generated abelian group $K$ is a Cox ring of some $\QQ$-factorial projective variety if and only if it is $(i)$ normal , $(ii)$ factorially graded, $(iii)$ has only trivial homogeneous units  and is $(iv)$ {\em almost freely graded\/}, i.e. for a system $f_1,\ldots,f_m \in R$ of pairwise non-associated $K$-prime generators each $m-1$ of their degrees generate $K$. 

Our aim is to provide criteria that will enable us to detect and construct Cox rings. The first result states that graded factoriality may be reduced to factoriality in the following sense:

\begin{theorem}\label{thm:alg-criterion-for-graded-factoriality-intro}
 Let $R = A[f_1,\ldots, f_m]$ a $K$-graded ring with $K$-prime generators $f_1,\ldots,f_m$ and let $S$ be the multiplicative system generated by the $f_i$. Then the following are equivalent: 
\begin{enumerate}
 \item $R$ is factorially $K$-graded,
 \item $(S^{-1}R)_0$ is factorial. 
\end{enumerate}
 
\end{theorem}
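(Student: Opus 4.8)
The plan is to factor the equivalence through the localized ring $S^{-1}R$, splitting the statement into two halves: first, that $R$ is factorially $K$-graded if and only if $S^{-1}R$ is (a graded version of Nagata's lemma, which is where the hypothesis that the $f_i$ are $K$-prime is used), and second, that $S^{-1}R$ is factorially graded if and only if its degree-zero part $(S^{-1}R)_0$ is factorial (a structural fact about graded rings carrying a homogeneous unit in every degree). Throughout I would use the elementary observation that, since $R$ has no $K$-homogeneous zero divisors, every nonzero homogeneous element is a non-zero-divisor: if $pw=0$ with $p$ homogeneous, then splitting $w$ into homogeneous components shows each $pw_d=0$, hence $w=0$. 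Thus cancellation of homogeneous factors is legitimate even though $R$ itself need not be a domain, and the same holds in $S^{-1}R$; in particular $(S^{-1}R)_0$ is an integral domain, so that the word ``factorial'' is meaningful. Note also that $R$, being of finite type over the noetherian ring $A$, is noetherian, so ACC on ideals is available.

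Set $M := \langle \deg f_1, \ldots, \deg f_m \rangle \subseteq K$. Since $R = A[f_1,\ldots,f_m]$ with $A \subseteq R_0$, all homogeneous degrees occurring in $R$ lie in $M$, and after inverting the $f_i$ the ring $S^{-1}R$ is $M$-graded with a homogeneous unit $f_1^{\nu_1}\cdots f_m^{\nu_m}$ in every degree $\sum_i \nu_i \deg f_i$, that is, in every degree of $M$. For the second half I would isolate the following general lemma: if $B$ is an $M$-graded ring without homogeneous zero divisors possessing a homogeneous unit in each degree, then $B$ is factorially $M$-graded if and only if $B_0$ is factorial. Multiplication by a unit of degree $-d$ identifies $B_d$ with $B_0$ and reduces everything to degree zero; one checks that a homogeneous element is a unit exactly when its degree-zero representative is a unit of $B_0$, that the $K$-primes of $B$ are precisely the associates under homogeneous units of the primes of $B_0$, and that a factorization of a homogeneous element into $K$-primes corresponds, after multiplying by a suitable unit, to a factorization of its degree-zero representative into primes of $B_0$. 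Applying this to $B = S^{-1}R$ (whose support lies in $M$, so that ``factorially graded'' is unambiguous) gives the equivalence $S^{-1}R$ factorially graded $\Leftrightarrow (S^{-1}R)_0$ factorial.

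The heart of the argument is the first half. The direction ``$R$ factorially graded $\Rightarrow S^{-1}R$ factorially graded'' is routine: a homogeneous non-unit of $S^{-1}R$ is, up to a unit, the image of some $g \in R^+$; factoring $g$ into $K$-primes in $R$ and discarding those factors associate to one of the $f_i$ (which become units), one checks that a $K$-prime of $R$ not associate to any $f_i$ remains $K$-prime after localization, and that at least one factor survives precisely when the element is a genuine non-unit. For the converse I would argue as in Nagata's criterion. Given a homogeneous non-unit $g \in R^+$, ACC on principal ideals lets me pull out all factors of the $f_i$ and write $g = f_1^{a_1}\cdots f_m^{a_m}\, g'$ with $g'$ divisible by no $f_i$; since the $f_i$ are $K$-prime it then follows that $g'$ is a non-unit in $S^{-1}R$ exactly when it is a non-unit in $R$. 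Factoring the image of $g'$ into $K$-primes of $S^{-1}R$, representing each such prime by an element $q_j \in R^+$ divisible by no $f_i$, and matching up the remaining powers of the $f_i$, reduces the problem to the key sub-lemma.

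That sub-lemma is what I expect to be the main obstacle: \emph{if $q \in R^+$ is divisible by no $f_i$ and is $K$-prime in $S^{-1}R$, then $q$ is $K$-prime in $R$.} The proof mirrors Nagata: from $q \mid ab$ in $R$ one obtains, say, $q \mid a$ in $S^{-1}R$, hence $sa = qc$ in $R$ for some $s \in S$ and homogeneous $c$; writing $s$ as a product of the $f_i$ and using that each $f_i$ is $K$-prime with $f_i \nmid q$, one cancels the factors of $s$ one at a time (legitimate since homogeneous elements are non-zero-divisors) to conclude $q \mid a$ in $R$. It is exactly here that both the $K$-primality of the generators and the coprimality arranged by pulling out the $f_i$ are indispensable; without them the localization cannot be descended. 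Combining the two halves then yields the equivalence of (1) and (2).
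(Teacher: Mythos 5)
Your proposal is correct and follows essentially the same route as the paper: the paper deduces this statement from the more general Theorem~\ref{thm:alg-criterion-for-graded-factoriality} (with $K'=\{0\}$ and $S$ generated by the $f_i$), whose proof consists of exactly your two halves --- a graded Nagata descent through $S^{-1}R$ using $K$-primality of the generators (Proposition~\ref{prop:localising-preserves-graded-factoriality} and Lemmas~\ref{lem:nagata0}, \ref{lem:nagata1}), and the passage to the Veronese subalgebra via homogeneous units in every degree (Lemma~\ref{main-lemma}). The only cosmetic difference is that the paper's Nagata step reduces to showing $K$-irreducibles are $K$-prime rather than factoring the element directly.
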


This allows generalization in the following way. Let $K'$ a subgroup of $K$. The Veronese subalgebra of $R$ corresponding to $K'$ is denoted $R_{K'}$. If any $K$-homogeneous element in $R$ is associated to a homogeneous element in $R_{K'}$, then $R$ is called {\em $K'$-associated\/}. Moreover, we say that $R$ is {\em $K$-noetherian\/} if any $K$-homogeneous ideal is generated by finitely many $K$-homogeneous elements. Now, our result states as follows (see Section 2 for the proof):

\begin{theorem}\label{thm:alg-criterion-for-graded-factoriality}
  Let $R$ a $K$-graded $K$-noetherian ring. Then for every subgroup $K'$ of $K$ the following are equivalent:
\begin{enumerate}
 \item $R$ is factorially $K$-graded,
 \item there is a multiplicative system $S$ generated by $K$-primes such that
\begin{enumerate}
 \item $S^{-1}R$ is $K'$-associated, 
 \item $(S^{-1}R)_{K'}$ is factorially $K'$-graded. 
\end{enumerate} 
\end{enumerate}
Furthermore, if $R$ is factorially $K$-graded and $K'$ a subgroup of $K$ then for any $S$ generated by $K$-primes such that $S^{-1}R$ is $K'$-associated, the Veronese subalgebra $(S^{-1}R)_{K'}$ is factorially $K'$-graded. 
\end{theorem}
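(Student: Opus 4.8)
The plan is to deduce both the equivalence and the ``furthermore'' clause from two self-contained lemmas: a \emph{Veronese descent} that transfers factoriality between $S^{-1}R$ and its Veronese $(S^{-1}R)_{K'}$ in both directions, and a \emph{graded Nagata lemma} that transfers factoriality between $R$ and the localization $S^{-1}R$. Writing $R' := S^{-1}R$, I will use freely that $R'$ again has no homogeneous zero divisors, that a unit of $R'$ lying in $R'_{K'}$ is a unit of $R'_{K'}$, and that an equality of homogeneous fractions clears cleanly: if $g/s = h/t$ with homogeneous $g,h,s,t$, then $tg = sh$ in $R$, since a nonzero homogeneous element is a nonzerodivisor. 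The descent lemma rests on the observation that, for $K'$-associated $R'$, a $K$-prime $q\in R'$ with $\deg q\in K'$ is automatically $K'$-prime in $R'_{K'}$ (if $q\mid ab$ in $R'_{K'}$ then $q\mid a$ or $q\mid b$ in $R'$, and the cofactor again has degree in $K'$), and conversely a $K'$-prime of $R'_{K'}$ is $K$-prime in $R'$ (here $K'$-association is used to replace the two test factors $a,b\in R'^+$ by associates of $K'$-degree). Granting this correspondence, factoriality passes both ways by taking a homogeneous non-unit, writing it via $K'$-association as a unit times an element of $R'_{K'}$, factoring that element, and regrouping the units using that each prime factor is associate to one of $K'$-degree. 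So: \emph{for $R'$ $K'$-associated, $R'$ is factorially $K$-graded iff $R'_{K'}$ is factorially $K'$-graded.}

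The bulk of the work is the graded Nagata lemma: \emph{if $S$ is generated by $K$-primes and $R'=S^{-1}R$ is factorially $K$-graded, then the $K$-noetherian ring $R$ is factorially $K$-graded}. Here $K$-noetherianity yields the ascending chain condition on homogeneous ideals, hence every homogeneous non-unit is a finite product of homogeneous irreducibles, so it suffices to show every homogeneous irreducible $\pi$ is $K$-prime. If $\pi$ becomes a unit in $R'$, it divides some $s\in S$; writing $s$ as a product of the $K$-prime generators and peeling them off one at a time (using $\pi$ irreducible and the generators $K$-prime) forces $\pi$ to be associate to one generator, hence $K$-prime. If $\pi$ stays a non-unit in $R'$, I first show its image is \emph{irreducible} in $R'$: a factorization $\pi/1=(b/s)(c/t)$ into non-units clears to $st\,\pi=bc$; peeling the $K$-prime factors of $st$ off against $b$ and $c$ (they cannot divide $\pi$, else $\pi$ would become a unit in $R'$) writes $\pi=b'c'$ in $R$, so irreducibility of $\pi$ makes one cofactor, say $b'$, a unit, whence $b/s=b'\cdot(\prod s_\lambda)/s$ is a unit of $R'$, contradicting the choice of a non-unit factorization. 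An irreducible in the factorially $K$-graded ring $R'$ is $K$-prime, and a short localization argument then descends $K$-primality to $\pi$: if $\pi/1\mid (a/\sigma)(b/\tau)$, clear denominators to $\pi\mid \rho ab$ with $\rho\in S$, discard $\rho$ (coprime to $\pi$ as above), and use $K$-primality of $\pi$.

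With these two lemmas the implication $(2)\Rightarrow(1)$ is immediate: condition (2) provides $S$ with $R'$ $K'$-associated and $R'_{K'}$ factorially $K'$-graded; the descent lemma upgrades this to $R'$ factorially $K$-graded, and the graded Nagata lemma (the only place $K$-noetherianity enters) then gives $R$ factorially $K$-graded.

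For $(1)\Rightarrow(2)$ together with the ``furthermore'' clause I start from $R$ factorially $K$-graded. The peeling arguments of the Nagata step run in reverse to show that $S^{-1}R$ is \emph{again} factorially $K$-graded for \emph{any} $S$ generated by $K$-primes: a $K$-prime of $R$ dividing some element of $S$ becomes a unit of $R'$ associate to a generator, while one coprime to $S$ stays $K$-prime in $R'$, so each homogeneous element factors in $R'$. Combining this with the $K'$-associatedness hypothesis, the descent lemma yields that $(S^{-1}R)_{K'}$ is factorially $K'$-graded, which is precisely the ``furthermore'' clause. Finally, to exhibit \emph{some} $S$ as in (2), I take $S$ generated by representatives of all associate classes of $K$-primes of $R$; since every homogeneous non-unit is a product of $K$-primes, inverting all of them makes every nonzero homogeneous element of $S^{-1}R$ a unit, so $S^{-1}R$ is trivially $K'$-associated, and (b) follows from the ``furthermore'' clause already proved. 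I expect the graded Nagata lemma, and specifically the claim that the image of a homogeneous irreducible either becomes a unit or stays irreducible in $S^{-1}R$, to be the principal obstacle, as this is exactly the step where a naive localization could ``reveal'' a hidden factorization; controlling it requires the generators of $S$ to be genuinely $K$-prime rather than merely irreducible, together with the absence of homogeneous zero divisors so that denominators clear without spurious $S$-multipliers.
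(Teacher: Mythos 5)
Your proposal is correct and follows essentially the same route as the paper: your ``Veronese descent'' is the paper's Lemma~\ref{main-lemma}, your ``graded Nagata lemma'' is exactly the combination of Lemmas~\ref{lem:nagata0} and~\ref{lem:nagata1} (ACC on homogeneous principal ideals giving irreducible factorizations, then peeling $K$-prime factors of $S$-elements to show irreducibles stay irreducible and primality descends), and your reverse peeling reproduces Proposition~\ref{prop:localising-preserves-graded-factoriality}, with the whole assembled just as in the paper's proof. The only deviation is cosmetic: for the existence of $S$ you invert representatives of all associate classes of $K$-primes rather than the paper's $S = R^+\setminus\{0\}$, which is if anything more literally compliant with the requirement that $S$ be generated by $K$-primes.
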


Our second tool for the construction of Cox rings turns a given factorially graded ring into a an almost freely and factorially graded ring. It is the homogeneous version of Gau{\ss}' Theorem, which is proven in Section 3:

\begin{theorem}
\label{thm:homgauss}
Let $R$ a $K$-graded ring without homogeneous zero divisors and let $R[T]$ be $K$-graded through a choice of a $K$-degree for $T$. Then $R$ is factorially $K$-graded if and only if $R[T]$ is so.
\end{theorem}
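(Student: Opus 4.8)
The plan is to transfer the classical proof of Gauss's theorem to the $K$-graded setting, the main new ingredient being a homogeneous notion of content together with a graded version of Gauss's lemma. Grade $R[T]$ by assigning $T$ the degree $w \in K$, so that a homogeneous element of degree $d$ is a polynomial $\sum_i a_i T^i$ with $a_i \in R$ homogeneous of degree $d - iw$. The implication ``$R[T]$ factorially $K$-graded $\Rightarrow R$ factorially $K$-graded'' is the easy one: the homogeneous units of $R[T]$ lie in $R$ (a factor of $1$ has $T$-degree $0$), so a nonzero $f \in R^+ \setminus R^*$ is a homogeneous non-unit of $R[T]$; factoring it into $K$-primes of $R[T]$ and comparing $T$-degrees forces every factor to lie in $R$, and each such factor is then seen to be $K$-prime in $R$ by restricting the divisibility relations back from $R[T]$ to $R$ (again via the $T$-degree).

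For the converse I would first observe that, since $R$ is factorially $K$-graded, any finite family of nonzero homogeneous elements of $R$ admits a homogeneous greatest common divisor, well defined up to homogeneous units, obtained from their $K$-prime factorizations. For a homogeneous $F = \sum_i a_i T^i \in R[T]$ I define its content $\mathrm{cont}(F)$ to be the gcd of the $a_i$, and call $F$ \emph{primitive} if $\mathrm{cont}(F) \in R^*$. The crucial step is the graded Gauss lemma: the product of two primitive homogeneous polynomials is primitive, equivalently $\mathrm{cont}(FG) = \mathrm{cont}(F)\,\mathrm{cont}(G)$ up to units. I would prove this by reduction modulo a $K$-prime $p$: since $p \mid gh$ forces $p \mid g$ or $p \mid h$ for homogeneous $g,h$, the quotient $R/(p)$ has no homogeneous zero divisors, and a leading-coefficient argument shows the same for $(R/(p))[T]$; thus $\overline{FG} = \overline{F}\,\overline{G} \neq 0$ in $(R/(p))[T]$ whenever $\overline{F}, \overline{G} \neq 0$, which is exactly the statement that a $K$-prime dividing neither content cannot divide $\mathrm{cont}(FG)$. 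The same reduction shows that every $K$-prime of $R$ remains $K$-prime in $R[T]$.

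Next I would localize at the multiplicative system $S = R^+ \setminus \{0\}$ of all nonzero homogeneous elements (a multiplicative set, as $R$ has no homogeneous zero divisors) to obtain the $K$-graded ring $L = S^{-1}R$, in which every nonzero homogeneous element is a unit. On the homogeneous elements of $L[T]$ the $T$-degree furnishes a Euclidean function: division with remainder is possible because leading coefficients are homogeneous units and the intermediate quotient terms are again homogeneous, the degrees matching up through multiples of $w$. From this Euclidean structure one gets homogeneous B\'ezout relations, hence Euclid's lemma, and by induction on the $T$-degree unique factorization; so $L[T]$ is factorially $K$-graded, its homogeneous $K$-primes being the homogeneous polynomials irreducible over $L$.

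Finally I would assemble the factorization of an arbitrary nonzero homogeneous non-unit $F \in R[T]$ as $F = \mathrm{cont}(F) \cdot F_0$ with $F_0$ primitive: the content factors into $K$-primes of $R$, which are $K$-prime in $R[T]$ by the reduction above, while $F_0$ factors in $L[T]$ into homogeneous irreducibles; clearing denominators and invoking the graded Gauss lemma turns the latter into a factorization of $F_0$ into primitive homogeneous polynomials, each of which is $K$-prime in $R[T]$ precisely because it is primitive and irreducible over $L$. I expect the main obstacle to be the graded Gauss lemma and the accompanying bookkeeping with the content --- in particular checking that homogeneous gcd's behave well when the coefficients $a_i$ live in the distinct degrees $d - iw$ (with possible coincidences when $w$ is torsion) --- together with the denominator-clearing step that upgrades divisibility in $L[T]$ to divisibility in $R[T]$.
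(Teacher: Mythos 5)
Your proposal is correct and follows essentially the same route as the paper: both directions use reduction modulo a $K$-prime $p$ (exploiting that $(R/\langle p\rangle)[T]$ inherits the absence of homogeneous zero divisors), the localization $L=(R^+\setminus\{0\})^{-1}R$ with division-with-remainder in $L[T]$ to get graded factoriality there, and a graded Gauss lemma to clear denominators --- the paper merely phrases the content in terms of minimal multiplicities $\nu_p$ rather than homogeneous gcd's, and deduces factoriality of $L[T]$ from principality of homogeneous ideals rather than from B\'ezout relations. These are cosmetic differences, and the denominator-clearing step you flag is handled exactly as you anticipate, via additivity of the multiplicities.
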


Another way to ascertain that $R$ is almost freely graded is to coarsen the grading. By the following theorem (proven in Section 4), graded factoriality is preserved if the coarsening consists in dropping a free direct summand of the grading group:

\begin{theorem}\label{thm:variation-of-free-part-of-grading}
 Let $R$ be a $K \oplus \ZZ^m$-graded $K$-noetherian ring without $K \oplus \ZZ^m$-homogeneous zero divisors. Then $R$ is factorially $K \oplus \ZZ^m$-graded if and only if it is factorially $K$-graded. Furthermore, if $R$ is factorially graded, then a $K \oplus \ZZ^m$-homogeneous element has one and the same decomposition with respect to both gradings. 
\end{theorem}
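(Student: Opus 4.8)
The plan is to compare the two gradings by exploiting that $\ZZ^m$ carries a total order $\leq$ compatible with its group structure; fix such an order, say lexicographic. For a nonzero $K$-homogeneous $f$, decompose $f = \sum_z f_z$ into its $(K \oplus \ZZ^m)$-homogeneous components (all sharing one $K$-degree), and let $\mathrm{in}_+(f) = f_{z_+}$, $\mathrm{in}_-(f) = f_{z_-}$ be the components of highest and lowest $\ZZ^m$-degree, with width $w(f) := z_+ - z_- \in \ZZ^m_{\geq 0}$. Since $R$ has no $(K\oplus\ZZ^m)$-homogeneous zero divisors, top and bottom components cannot cancel in a product, so $\mathrm{in}_\pm(fg) = \mathrm{in}_\pm(f)\,\mathrm{in}_\pm(g)$ and $w(fg) = w(f) + w(g)$ for nonzero $K$-homogeneous $f,g$; the $(K\oplus\ZZ^m)$-homogeneous elements are exactly those of width $0$.

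First I would treat "factorially $K$-graded $\Rightarrow$ factorially $(K\oplus\ZZ^m)$-graded" together with the final clause. Given a $(K\oplus\ZZ^m)$-homogeneous non-unit $f$, factor it into $K$-primes $f = p_1 \cdots p_n$; additivity of width and $w(f)=0$ force $w(p_i)=0$, so each $p_i$ is $(K\oplus\ZZ^m)$-homogeneous. To upgrade $K$-primality to $(K\oplus\ZZ^m)$-primality, note that if $p_i \mid gh$ with $g,h$ homogeneous for the finer grading, then they are $K$-homogeneous and hence $p_i \mid g$ or $p_i \mid h$ as $K$-homogeneous elements; and a $K$-homogeneous quotient of two $(K\oplus\ZZ^m)$-homogeneous elements is automatically $(K\oplus\ZZ^m)$-homogeneous, again because distinct $\ZZ^m$-layers cannot collapse without homogeneous zero divisors. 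This shows the $p_i$ are $(K\oplus\ZZ^m)$-primes and that the two decompositions coincide, which also settles the "furthermore."

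The substantial direction is "factorially $(K\oplus\ZZ^m)$-graded $\Rightarrow$ factorially $K$-graded," and its engine is a homogeneous Gauss lemma. For each $(K\oplus\ZZ^m)$-prime $p$ I would define a content valuation $v_p(f) := \min_z v_p(f_z)$, where $v_p$ on homogeneous elements counts the multiplicity of $p$ in the unique finer factorization; one checks $p \mid f$ in the $K$-homogeneous sense iff $v_p(f) \geq 1$. The crux is multiplicativity $v_p(fg) = v_p(f)+v_p(g)$, equivalently that a product of \emph{primitive} elements (those with $v_p = 0$ for all $p$) is primitive: choosing highest layers $f_{z_0}, g_{w_0}$ not divisible by $p$, every other contribution to $(fg)_{z_0+w_0}$ is a multiple of $p$, so $(fg)_{z_0+w_0} \equiv f_{z_0}g_{w_0} \pmod p$, which is nonzero mod $p$ since $p$ is prime. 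Multiplicativity then immediately gives that every $(K\oplus\ZZ^m)$-prime is $K$-prime, since $v_p(gh)\geq 1$ forces $v_p(g)\geq 1$ or $v_p(h)\geq 1$.

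Finally I would assemble the $K$-factorization. Existence of factorizations into $K$-irreducibles follows from $K$-noetherianity via the ascending chain condition on $K$-homogeneous principal ideals. To promote $K$-irreducibles to $K$-primes, write $f = c \cdot f^{\ast}$ as content times primitive part; the content $c$ is a product of $(K\oplus\ZZ^m)$-primes, already known to be $K$-prime, so by irreducibility either $f$ is associated to such a prime, or $f = f^{\ast}$ is primitive. The remaining point, that a primitive $K$-irreducible is $K$-prime, is where I expect the main obstacle. Using Gauss's lemma one reduces a divisibility $f \mid gh$ to primitive parts and must then show that the primitive homogeneous elements form a factorial monoid. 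The natural route is to localize at the multiplicative system $S$ generated by all $(K\oplus\ZZ^m)$-primes: there every finer-homogeneous element becomes a unit, the primitive elements of $R$ are identified up to units with the $K$-homogeneous elements of $S^{-1}R$, and one reduces factoriality of the primitive monoid to factoriality of a degree-zero subring, in the spirit of Theorem \ref{thm:alg-criterion-for-graded-factoriality-intro}. The delicate bookkeeping will be to verify that $K$-noetherianity, and with it existence of factorizations, survives this localization.
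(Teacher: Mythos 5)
Your easy direction and the ``furthermore'' clause are fine: the width/leading-layer argument correctly shows that the $K$-prime factors of a $K\oplus\ZZ^m$-homogeneous element are themselves $K\oplus\ZZ^m$-homogeneous (a point the paper asserts without proof), and a $K$-prime that is $K\oplus\ZZ^m$-homogeneous is automatically $K\oplus\ZZ^m$-prime since the divisibility test for the finer grading is a special case of the one for the coarser grading. Your Gauss-content argument for ``every $K\oplus\ZZ^m$-prime is $K$-prime'' is also correct and is a legitimate alternative to the paper's Proposition 4.1, which proves the same statement by induction on the number of $\ZZ$-homogeneous layers of $g$ and $h$ using leading terms.

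The genuine gap is in the last step of the hard direction. After reducing to ``a primitive $K$-irreducible is $K$-prime,'' you localize at the system $S$ generated by all $K\oplus\ZZ^m$-primes and assert that factoriality of the primitive monoid reduces ``to factoriality of a degree-zero subring, in the spirit of Theorem~\ref{thm:alg-criterion-for-graded-factoriality-intro}'' --- but that theorem's hypotheses (finitely many $K$-prime generators over a noetherian base) are not available here, and you give no argument for why the relevant degree-zero or Veronese subring is factorial. This is precisely where the paper does its real work: it applies Theorem~\ref{thm:alg-criterion-for-graded-factoriality} to the \emph{fine} grading with $K'=\{0_K\}\oplus\ZZ^m$ to conclude that the Veronese subalgebra $(S^{-1}R)_{\{0_K\}\oplus\ZZ^m}$ is factorially $\ZZ^m$-graded, hence an honest UFD because the grading group is torsion free (the equivalence from [An] quoted in the introduction), and then applies Theorem~\ref{thm:alg-criterion-for-graded-factoriality} a second time, to the coarse $K$-grading with $K'=\{0_K\}$, noting that this same subalgebra is $(S^{-1}R)_{0_K}$, that $S$ is generated by $K$-primes, and that $S^{-1}R$ is $\{0_K\}$-associated. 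Without some such input, your claim that the primitive homogeneous elements form a factorial monoid is essentially equivalent to what you are trying to prove. Note also that your stated worry --- whether $K$-noetherianity survives the localization --- is a red herring: the Nagata-type descent (Lemma~\ref{lem:nagata1}) only needs existence of $K$-irreducible factorizations in $R$ itself, which you already have; what is missing is graded factoriality of $S^{-1}R$, not noetherianity.
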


In Section 5 we give examples of Cox rings constructed via the tools presented above. Among others, we treat the rings $\CC[T_1,\ldots,T_4]/\langle T_1^{m_1} + \ldots + T_4^{m_4} \rangle$ whose divisor class groups are calculated in \cite{St}. It is an immediate consequence of our results that all those rings, UFDs and non-UFDs, are factorially graded and the adjunction of a further homogeneous variable will then turn these rings into Cox rings. 
The class of rings that our results allow us to verify algebraically as Cox rings includes all Cox rings calculated geometrically in \cite{De, HaTs, HaSu}.

The author wishes to thank J\"urgen Hausen for many fruitful discussions. 

\section{Proof of Theorem~\ref{thm:alg-criterion-for-graded-factoriality}}

A non-zero $f \in R^+ \setminus R^*$ is {\em $K$-irreducible\/} if $f = g h$ with $g, h \in R^+$ implies $g \in R^*$ or $h \in R^*$. We now study the behaviour of factorially graded rings with respect to localizations. 
Let $R = \oplus_{w \in K}{R_w}$ be a $K$-graded ring and $S\subset R^+$ a multiplicative system. Then the localization $S^{-1}R$ is $K$-graded via 
\[ (S^{-1}R)_w := \left\{ \frac{r}{s} \in S^{-1}R; \; r \in R_{\deg_K(s) +w} \right\} \text{ for }w \in K.
 \]
Note that the localization $S^{-1}\mathfrak{a}$ of a $K$-homogeneous ideal is again $K$-homogeneous.

\begin{proposition}\label{prop:localising-preserves-graded-factoriality}
Let $R$ a $K$-graded ring and $S \subset R^+$ a multiplicative system. If $R$ is factorially $K$-graded then so is $S^{-1}R$. 
\end{proposition}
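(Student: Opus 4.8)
The plan is to track how the building blocks of $R$, namely its $K$-primes, behave under the passage to $S^{-1}R$, and then to assemble a factorization of an arbitrary homogeneous non-unit out of them. I would begin with two preliminary observations. First, since $R$ has no $K$-homogeneous zero divisors and $S \subseteq R^+$ consists of nonzero homogeneous elements, a short computation with the localization relation shows that $S^{-1}R$ again has no $K$-homogeneous zero divisors, so that the notions of $K$-prime and factorial $K$-grading make sense for it. Second, by the description of the grading on $S^{-1}R$ recalled above, every $K$-homogeneous element can be written as $f/s$ with $f \in R^+$ and $s \in S$; in particular homogeneous elements of $S^{-1}R$ have homogeneous numerators drawn from $R$.

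The core of the argument is a dichotomy for the image of a $K$-prime $p$ of $R$: I claim that $p/1$ is a unit in $S^{-1}R$ if $p$ divides some element of $S$, and that $p/1$ is $K$-prime in $S^{-1}R$ otherwise. The unit case is immediate, since $s = pq$ with $s \in S$ gives $(p/1)(q/s) = 1$, and conversely an inverse of $p/1$ forces $p$ to divide an element of $S$. For the prime case, suppose $p$ divides no element of $S$ and that $p/1 \mid (a/s_1)(b/s_2)$ in $S^{-1}R$ with homogeneous factors. Clearing denominators through the localization relation yields an identity $u\,p\,c\,s_1 s_2 = u\,a\,b\,t$ in $R$ for suitable $c \in R^+$ and $u,t \in S$, so that $p$ divides the homogeneous product $(ab)(ut)$ in $R$. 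Since $ut \in S$ is not divisible by $p$, repeated use of the $K$-primality of $p$ in $R$ gives $p \mid a$ or $p \mid b$, which translates back to $p/1 \mid a/s_1$ or $p/1 \mid b/s_2$. The same bookkeeping shows $p/1$ is nonzero and, by the unit case, a non-unit, so $p/1$ is indeed $K$-prime.

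With the dichotomy in hand the assembly is routine. Given a nonzero $K$-homogeneous non-unit $g \in S^{-1}R$, write $g = f/s$ as above. If $f$ were a unit of $R$ then $g = (f/1)(1/s)$ would be a unit of $S^{-1}R$, so in fact $f \in R^+ \setminus R^*$, and, $R$ being factorially $K$-graded, $f = u\,p_1\cdots p_r$ with $u \in R^*$ and $K$-primes $p_i$. Passing to $S^{-1}R$, the factor $u/s$ is a unit, and by the dichotomy each $p_i/1$ is either a unit or a $K$-prime of $S^{-1}R$; collecting the units into the unit part exhibits $g$ as a product of $K$-primes of $S^{-1}R$, where at least one $p_i/1$ must survive as a prime since otherwise $g$ would be a unit. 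Hence $S^{-1}R$ is factorially $K$-graded.

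The step I expect to be the main obstacle is the prime case of the dichotomy, specifically the careful handling of the auxiliary factors $u,t \in S$ that the localization equivalence relation introduces: one must verify that absorbing them does not spoil the applicability of $K$-primality, and this is precisely where the hypotheses that $p$ divides no element of $S$ and that $S$ is multiplicatively closed (so that $ut \in S$) are both used.
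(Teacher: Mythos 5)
Your proposal is correct and follows essentially the same route as the paper: both proofs rest on the dichotomy that the image of a $K$-prime $p$ in $S^{-1}R$ is a unit when $\langle p\rangle\cap S\neq\emptyset$ and remains $K$-prime otherwise (proved by clearing denominators and using that the element of $S$ absorbed in the process cannot be divisible by $p$), and then factor the numerator of an arbitrary homogeneous non-unit and absorb the units. The only cosmetic difference is that you spell out the verification that $S^{-1}R$ has no homogeneous zero divisors and that at least one prime factor survives, which the paper leaves implicit.
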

\begin{proof}
 In general, for a $K$-prime $f \in R$ the fraction $f / 1 \in S^{-1}R$ is either $K$-prime or a unit, depending on whether or not $\langle f \rangle \cap S$ is empty: If $\langle f \rangle \cap S = \emptyset$ and $f/1$ divides a product of homogeneous elements $g/s, h/t \in S^{-1}R$, then there are $r \in R^+$ and $q \in S$ with $g h q = f s t r \in \langle f \rangle$. Since $q$ cannot lie in $\langle f \rangle$ either $g$ or $h$ must, in particular $f/1$ divides either $g/1$ or $h/1$ in $S^{-1}R$. If otherwise $\langle f \rangle \cap S$ is non-empty then there is an $r \in R$ with $r f= s \in S$, so $(f/1) (r/s) = 1$. 

Now, if $R$ is factorially $K$-graded, we may write an arbitrary $f/s$ as $(1/s) (f/1)$ and decompose $f$ into $K$-prime factors $f_1\cdots f_m$. Grouping the fractions $f_i/1$ that are units together with $1/s$ we get the desired decomposition. 
\end{proof}

The converse of the above is true if $R$ is $K$-noetherian and $S$ is generated by $K$-primes, as the following two statements show, the latter of which was formulated by Nagata \cite{Na} with respect to localizations of UFDs.

\begin{lemma}\label{lem:nagata0}
 Let $R$ a $K$-graded $K$-noetherian ring. Then every homogeneous non-zero non-unit is a product of $K$-irreducible elements. 
\begin{proof}
Suppose that the set $M$ of principal ideals $\langle r \rangle$ generated by elements $r \in R^+\setminus \{0\}$ who are no products of $K$-irreducible elements is non-empty. Then, by $K$-noetherianity it has a maximal element $\langle r' \rangle$ whose generator $r'$ is in particular not $K$-irreducible. So there are $s, t \in R^+ \setminus R^*$ with $r' = s t$ and $\langle r' \rangle \subsetneq \langle s \rangle, \langle t \rangle$ are proper inclusions. Thus, by maximality of $\langle r' \rangle$ the elements $s$ and $t$ are products of $K$-irreducible elements. But then, so is $r'$ - a contradiction. 
\end{proof}
\end{lemma}

\begin{lemma}\label{lem:nagata1}
 Let $R$ be a $K$-graded ring such that every homogeneous non-zero non-unit is a product of $K$-irreducible elements. Let $S$ a multiplicative system generated by $K$-prime elements of $R$. If $S^{-1}R$ is factorially $K$-graded then so is $R$. 
\begin{proof}
By Lemma~\ref{lem:nagata0}, we have show that every $K$-irreducible $f \in R$ is $K$-prime. Let $G$ be a set of $K$-prime generators of $S$. If $f$ is divided by an element of $G$, then it is already associated to that element and hence also $K$-prime. So assume that no element of $G$ divides $f$. Then $f/1 \in S^{-1}R$ is no unit and is in fact $K$-irreducible: For if $f s_1 s_2 = r_1 r_2$ with $r_i\in R^+$ and $s_i \in S$, the $K$-prime factors of $s_1 s_2$ can be factored out of $r_1 r_2$, leaving a decomposition $f = r_1' r_2'$. But then $r_1' \in R^*$ or $r_2' \in R^*$, so $r_1 \in (S^{-1}R)^*$ or $r_2 \in (S^{-1}R)^*$. Since $S^{-1}R$ is factorially $K$-graded, we can conclude that $f/1$ is $K$-prime in $S^{-1}R$.

Now consider $g, h \in R^+$ with $f | g h$. By $K$-primality of $f/1$ we may assume $f | g$ in $S^{-1}R$, so there exist $s \in S$ and $r \in R^+$ with $r f = s g$. Since the $K$-prime factors of $s$ all have to occur in $r$, we get $r 'f = g$ for some $r' \in R$, so $f | g$. 
\end{proof}

\end{lemma}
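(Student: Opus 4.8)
The plan is to reduce factorial $K$-gradedness of $R$ to the single assertion that every $K$-irreducible element of $R$ is $K$-prime. This reduction is immediate: the hypothesis (equivalently, Lemma~\ref{lem:nagata0}) already guarantees that every non-zero homogeneous non-unit is a product of $K$-irreducibles, so once irreducibles are shown to be prime, every such element becomes a product of $K$-primes and $R$ is factorially $K$-graded. Accordingly I would fix a $K$-irreducible $f \in R$, fix a generating set $G \subseteq R^+$ of $K$-primes for $S$, and split into two cases according to whether some $p \in G$ divides $f$.

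In the easy case, if $p \mid f$ for some $p \in G$, then $K$-irreducibility of $f$ forces the cofactor to be a unit, so $f$ is associated to the $K$-prime $p$ and is itself $K$-prime. The substance lies in the opposite case, where no element of $G$ divides $f$. Here I would first check that $f/1$ is a non-unit in $S^{-1}R$: a relation $f r = s \in S$ would exhibit $f$ as a divisor of a product of the $K$-primes in $G$, and peeling these off one at a time -- each divides the cofactor, since none divides $f$ -- would collapse $f$ to a unit, contradicting irreducibility. The key step is then to show that $f/1$ is $K$-irreducible in $S^{-1}R$. Given a homogeneous factorization $f/1 = (r_1/s_1)(r_2/s_2)$, I would clear denominators to obtain $f s_1 s_2 = r_1 r_2$ in $R$ (legitimate because $R$ has no homogeneous zero divisors, so the auxiliary element arising from the equality of fractions cancels), and then use $K$-primality of the generators of $S$ to factor every $K$-prime occurring in $s_1 s_2$ out of $r_1 r_2$, distributing each to whichever of $r_1, r_2$ it divides. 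This yields an honest factorization $f = r_1' r_2'$ in $R$; irreducibility makes one factor, say $r_1'$, a unit of $R$, and since the $K$-primes removed from $r_1$ lie in $S$ they are units in $S^{-1}R$, whence $r_1/s_1$ is a unit there.

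Finally I would transfer $K$-primality back down. Since $S^{-1}R$ is factorially $K$-graded, its $K$-irreducible $f/1$ is $K$-prime (in a factorially $K$-graded ring a $K$-irreducible, being a product of $K$-primes, must equal one of them up to units). Given $g,h \in R^+$ with $f \mid gh$ in $R$, passing to $S^{-1}R$ gives $f/1 \mid (g/1)(h/1)$, so $f/1$ divides, say, $g/1$; clearing denominators produces $f a = g s$ with $s \in S$ and $a \in R^+$, and peeling the $K$-prime factors of $s$ off one at a time (each divides $f a$ but not $f$, hence divides $a$) leaves $f \mid g$ in $R$. Thus $f$ is $K$-prime in both cases, which completes the argument.

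I expect the main obstacle to be the $K$-irreducibility claim for $f/1$: a priori, localization can manufacture new factorizations, and the only thing preventing this is the assumption that $S$ is generated by $K$-primes together with the absence of homogeneous zero divisors. This combination is precisely what licenses the clean cancellation of the $S$-part, both in the irreducibility argument and in the final descent, and should be highlighted as the crux of the proof.
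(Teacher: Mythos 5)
Your proposal is correct and follows essentially the same route as the paper's proof: reduce to showing $K$-irreducibles are $K$-prime, split on whether a generator of $S$ divides $f$, show $f/1$ is a $K$-irreducible non-unit of $S^{-1}R$ by peeling the $K$-prime factors of $S$-elements out of factorizations, invoke graded factoriality of $S^{-1}R$ to get $K$-primality of $f/1$, and descend divisibility by cancelling the $K$-prime factors of $s$. You are in fact slightly more careful than the paper on two minor points it leaves implicit (why $f/1$ is a non-unit, and why clearing denominators is legitimate in the absence of homogeneous zero divisors).
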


The above preliminaries show that factorially graded rings behave with respect to graded localizations analogously to UFDs. However, the following lemma, which is essential for our proof of Theorem~\ref{thm:alg-criterion-for-graded-factoriality}, is a priori unique to the setting of graded rings. For it relies on the fact that if a product $f g$ of homogeneous elements and one of its factors $f$ lie in the Veronese subalgebra $R_{K'}$, then so does the other factor $g$. An analogous statement on subrings of a non-graded ring is generally false. 
 
\begin{lemma}\label{main-lemma}
Let $R$ be a $K$-graded ring and $K'$ a subgroup of $K$ such that $R$ is $K'$-associated. Then the following hold:
\begin{enumerate}
 \item A element $f \in R_{K'}^+$ is $K'$-prime in $R_{K'}$ if and only if it is $K$-prime in $R$.
\item The ring $R$ is factorially $K$-graded if and only if $R_{K'}$ is factorially $K'$-graded.
\end{enumerate}
\end{lemma}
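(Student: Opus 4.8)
The plan is to isolate the two structural facts the argument rests on and then read off both assertions from them.

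First I would record how units behave under the hypotheses. Because $R$ has no $K$-homogeneous zero divisors, a homogeneous unit has a homogeneous inverse (compare homogeneous components in $u u^{-1} = 1$), and any association $f = u g$ between homogeneous elements is realized by a homogeneous unit $u$ of degree $\deg f - \deg g$. In particular, if $f \in R_{K'}^+$ is a unit in $R$ then its inverse has degree in $K'$ and already lies in $R_{K'}$; hence for homogeneous elements of $R_{K'}$ being a unit in $R_{K'}$ is the same as being a unit in $R$, so the non-unit conditions in the two notions of primality agree. The second fact is the Veronese observation stressed before the lemma: if $f$ and $f g$ are homogeneous and both lie in $R_{K'}$, then $\deg g = \deg(f g) - \deg f \in K'$, so $g \in R_{K'}$. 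Combined with $K'$-associatedness, this is what lets divisibility and factorizations pass between $R$ and $R_{K'}$.

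For assertion (i), take $f \in R_{K'}^+$. If $f$ is $K$-prime and $g,h \in R_{K'}^+$ with $f \mid g h$ in $R_{K'}$, then $f \mid g h$ in $R$, so say $f \mid g$ in $R$; writing $g = f c$ and comparing degrees forces $c$ homogeneous of degree in $K'$, hence $c \in R_{K'}$ and $f \mid g$ in $R_{K'}$. Conversely, if $f$ is $K'$-prime in $R_{K'}$ and $g,h \in R^+$ satisfy $f \mid g h$ in $R$, I would use $K'$-associatedness to replace $g,h$ by associated elements $g',h' \in R_{K'}^+$; then $f \mid g' h'$ with $g' h' \in R_{K'}$, and the Veronese observation puts the cofactor into $R_{K'}$, so $f \mid g' h'$ in $R_{K'}$. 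Now $K'$-primality yields $f \mid g'$ or $f \mid h'$ in $R_{K'}$, whence $f \mid g$ or $f \mid h$ in $R$.

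For assertion (ii) I would reduce to (i) via the standard remark that an associate of a $K$-prime (resp. $K'$-prime) by a homogeneous unit is again prime. If $R$ is factorially $K$-graded and $f \in R_{K'}^+$ is a non-zero non-unit, factor $f = p_1 \cdots p_n$ into $K$-primes in $R$, replace each $p_i$ by an associated $q_i \in R_{K'}^+$ (which is $K$-prime, hence $K'$-prime by (i)); the accumulated homogeneous unit $u$ satisfies $f = u\, q_1 \cdots q_n$ with $f$ and the $q_i$ in $R_{K'}$, so $u \in (R_{K'})^*$ by the Veronese observation, and absorbing $u$ into $q_1$ gives a $K'$-prime factorization. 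Conversely, if $R_{K'}$ is factorially $K'$-graded and $f \in R^+$ is a non-zero non-unit, write $f = u g$ with $g \in R_{K'}^+$ by $K'$-associatedness, factor $g$ into $K'$-primes in $R_{K'}$ (which are $K$-prime in $R$ by (i)), and reabsorb $u$ to obtain a $K$-prime factorization of $f$.

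The routine content is just bookkeeping with degrees and units; the one genuinely load-bearing point — and the only place where the grading is essential — is the passage from divisibility and factorization in $R$ to the same in $R_{K'}$. That step works precisely because the cofactor of one $R_{K'}$-element inside another automatically lands in $R_{K'}$, a property with no analogue for arbitrary subrings. Making sure each such cofactor, and each accumulated unit, genuinely carries degree in $K'$ is where I would be most careful.
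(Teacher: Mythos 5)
Your proof is correct and follows essentially the same route as the paper's: the Veronese cofactor observation, passing divisibility between $R$ and $R_{K'}$ via homogeneous units supplied by $K'$-associatedness, and transporting factorizations factor by factor while absorbing the accumulated unit. The only difference is that you spell out the unit bookkeeping (homogeneous inverses, matching of unit conditions in $R$ and $R_{K'}$) that the paper leaves implicit.
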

\begin{proof}
First, note that for homogeneous $f, g$ with $f \in R_{K'}$ and $f g \in R_{K'}$ we indeed have $\deg_K(g) + \deg_K(f) \in K'$ and $\deg_K(f) \in K'$, so $\deg_K(g) \in K'$, i.e. $g \in R_{K'}$. 

For $(i)$, consider a $K'$-prime $f \in R_{K'}$. Given $g, h, t \in R^+$ with $f t = g h$, we fix $s_g, s_h \in R^* \cap R^+$ with $g s_g, h s_h \in R_{K'}$ and get $f (t s_g s_h) = (g s_g)(h s_h)$. Then $t s_g s_h$ already lies in $R_{K'}$, so $f | g s_g$ or $f | h s_h$ in $R_{K'}$, which implies $f | g$ or $f | h$ in $R$. Thus, $f$ is $K$-prime. Conversely, a $K$-prime $f \in R_{K'}$ clearly is $K'$-prime in $R_{K'}$. 

For $(ii)$, first suppose that $R$ is factorially $K$-graded and let $f \in R_{K'}^+\setminus R^*$. Then $f$ has a decomposition $f = f_1 \cdots f_m$ in $R$. Taking $s_i \in R^* \cap R^+$ with $f_i s_i \in R_{K'}$ we get $f = (s_2^{-1} \cdots s_m^{-1} f_1) (f_2 s_2) \cdots (f_m s_m)$ with all factors in brackets lying in $R_{K'}$. By $(i)$, this is a decomposition into $K'$-primes as needed. For the converse, let $R_{K'}$ be factorially $K'$-graded and $f \in R^+ \setminus R^*$. Again, we fix $s_f \in R^* \cap R^+$ with $f s_f \in R_{K'}$. Then $f s_f$ has a decomposition into $K'$-primes which are also $K$-primes in $R$ by $(i)$, and multiplication by $s_f^{-1}$ yields a decomposition of $f$ into $K$-primes.  
\end{proof}

\begin{proof}[Proof of Theorem~\ref{thm:alg-criterion-for-graded-factoriality}]
Let $R$ be factorially $K$-graded and let $K'$ be a subgroup of $K$. The existence of an $S$ such that $S^{-1}R$ is $K'$-associated is obvious: take $S:=R^+ \setminus \{0\}$. Now let $S$ be any such  multiplicative system. By Proposition~\ref{prop:localising-preserves-graded-factoriality} the localization $S^{-1}R$ is factorially $K$-graded and by choice of $S$, we may apply Lemma~\ref{main-lemma} and conclude that $(S^{-1}R)_{K'}$ is factorially $K'$-graded.

Conversely, suppose there is an $S$ generated by $K$-primes and a subgroup $K'$ such that $S^{-1}R$ is $K'$-associated and $(S^{-1}R)_{K'}$ is factorially $K'$-graded. Then Lemma~\ref{main-lemma} yields that $S^{-1}R$ is factorially $K$-graded. And since $R$ is $K$-noetherian, Lemmas~\ref{lem:nagata0} and ~\ref{lem:nagata1} imply that $R$ is factorially $K$-graded.
\end{proof}

\section{Proof of Theorem~\ref{thm:homgauss}}

In this section, let $R$ be a $K$-graded ring (without $K$-homogeneous zero divisors) and let $R[T]$ have the $K$-grading extending that of $R$ which is obtained through the choice of an arbitrary $w \in K$ as the $K$-degree of $T$. As before,  the $K$-degree of a $K$-homogeneous polynomial $f \in R[T]$ is denoted $\deg_K(f)$, whereas $\deg(f) \in \ZZ_{\geq 0}$ is the maximal exponent occuring in $f$ with a non-vanishing coefficient, as usual.  

Since $T$ is $K$-homogeneous, a $K$-homogeneous polynomial $f \in R[T]$ has only $K$-homogenous coefficients. 
So, as $R$ has no $K$-homogeneous zero divisors the leading term of a product $g h$ with $g, h \in R[T]^+$ is just the product of the leading terms of $g$ and $h$, i.e. $R[T]$ has no $K$-homogeneous zero divisors either.  

\begin{lemma}\label{lem-proof-homgauss-if} 
Let $R$ and $R[T]$ be $K$-graded as above. Then a non-zero $p \in R^+\setminus R^*$ is $K$-prime if and only if it is $K$-prime in $R[T]$. 
Thus, if $R$ is factorially graded then the decomposition of $r \in R$ into $K$-prime factors in $R$ is also a decomposition into $K$-prime factors in $R[T]$. 
\begin{proof}
By the above, $R/\langle p \rangle$ has no homogeneous zero divisors precisely if $(R/\langle p \rangle)[T]$, which is isomorphic to $R[T]/\langle p\rangle$, has none. 
\end{proof}
\end{lemma}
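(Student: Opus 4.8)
The plan is to translate $K$-primality of $p$ into a property of the homogeneous quotient and then move that property across the variable $T$. The key reformulation, which I would establish first, is that for any $K$-graded ring $B$ and any non-zero homogeneous non-unit $q \in B$, the element $q$ is $K$-prime precisely when the $K$-graded quotient $B/\langle q\rangle$ has no $K$-homogeneous zero divisors. This is a direct unwinding of the definitions: a relation $q \mid gh$ with $g,h \in B^+$ says $\bar g \bar h = 0$ in $B/\langle q\rangle$, while $q \mid g$ or $q \mid h$ says $\bar g = 0$ or $\bar h = 0$, so the implication defining $K$-primality is exactly the absence of homogeneous zero divisors in $B/\langle q\rangle$. (Since $q$ is homogeneous, $\langle q\rangle$ is a homogeneous ideal and the quotient is indeed $K$-graded.)

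Applying this to $B = R$ and to $B = R[T]$, the desired equivalence becomes the comparison of homogeneous zero divisors in $R/\langle p\rangle$ and in $R[T]/\langle p\rangle$. Here I would invoke the canonical isomorphism $R[T]/\langle p\rangle \cong (R/\langle p\rangle)[T]$ and check that it is an isomorphism of $K$-graded rings, the target carrying the grading induced by the same choice $\deg_K(T)=w$; this is clear because $p$ is homogeneous, so forming the quotient commutes degree by degree with adjoining $T$. The problem is thereby reduced to the assertion that $R/\langle p\rangle$ has no $K$-homogeneous zero divisors if and only if $(R/\langle p\rangle)[T]$ has none. This last equivalence is precisely the leading-coefficient argument already given in the preamble of this section, now applied to $B := R/\langle p\rangle$ in place of $R$: one direction is trivial since $B$ is a $K$-graded subring of $B[T]$, and for the other the coefficient of the top power $T^{\deg(g)+\deg(h)}$ in a product $gh$ of non-zero homogeneous polynomials is the product of the two $K$-homogeneous, non-zero leading coefficients, which cannot vanish when $B$ has no homogeneous zero divisors.

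For the concluding ``Thus'' clause I would first record that the inclusion $R \hookrightarrow R[T]$ satisfies $R[T]^* \cap R = R^*$, since a homogeneous unit of $R[T]$ must have $T$-degree $0$ by the same degree count ($uv=1$ forces $\deg u + \deg v = 0$). Consequently, if $R$ is factorially graded and $r = p_1 \cdots p_k$ is a decomposition into $K$-primes in $R$, then each $p_i$ remains a non-unit in $R[T]$ and, by the equivalence just proved, remains $K$-prime there; hence the identical factorization exhibits $r$ as a product of $K$-primes in $R[T]$.

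The only genuine difficulty is conceptual rather than computational: isolating the reformulation of the first paragraph, namely that $K$-primality is captured exactly by the absence of $K$-homogeneous zero divisors in the homogeneous quotient, and recognizing that this is precisely the property transported by the graded isomorphism $R[T]/\langle p\rangle \cong (R/\langle p\rangle)[T]$. Once the statement is recast in these terms, the remaining work is the routine leading-coefficient estimate, which has in effect already been carried out for $R$ itself at the start of the section.
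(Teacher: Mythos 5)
Your proposal is correct and follows essentially the same route as the paper: recast $K$-primality of $p$ as the absence of $K$-homogeneous zero divisors in the quotient, transport this along the graded isomorphism $R[T]/\langle p\rangle \cong (R/\langle p\rangle)[T]$, and conclude with the leading-coefficient observation already made at the start of the section. You merely spell out the details (including the unit check for the ``Thus'' clause) that the paper leaves implicit.
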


\begin{proof}[Proof of Theorem~\ref{thm:homgauss}, ``if''-direction]
 Let $R[T]$ be factorially $K$-graded and let $r \in R$. Then all $K$-prime factors of the decomposition of $r$ in $R[T]$ are constant and therefore $K$-prime in $R$. So $R$ is factorially $K$-graded. 
\end{proof}

Next, we prepare the proof of the converse. 

\begin{proposition}
For $R$ and $R[T]$ as above the following are equivalent: 
\begin{enumerate}
 \item $R^+\setminus \{0\} \subseteq R^*$,
 \item every $K$-homogeneous ideal $\mathfrak{a}$ of $R[T]$ is generated by a $K$-homogeneous element $f \in \mathfrak{a}$ of minimal degree $\deg(f) \in \ZZ_{\geq 0}$. 
\end{enumerate}
\begin{proof}
Let $R^+ \setminus \{0\} \subseteq R^*$. First, we observe that $R[T]$ allows division with remainder for $K$-homogeneous elements in the following sense: For non-zero polynomials $f,g \in R[T]^+$ there are $q ,r \in R[T]^+$ with $f = q g + r$ such that $r =0$ or $\deg(r) < \deg(g)$. 
So let 
\[f = a_0 T^0 +\ldots +a_m T^m, \quad g = b_0 T^0 +\ldots +b_n T^n\in R[T]^+ .\] 
 If $m=0$, then $f = 0 \cdot g + f$ is as wanted. Let now $m>0$. We only need to consider the case $m \geq n$. Then $f':= f - b_n^{-1}a_m T^{m-n} g$ is $K$-homogeneous of degree $\deg_K(f)$ and by induction we find $q', r \in R[T]^+$ with $f'= q^{\prime} g + r$ and $r =0$ or $\deg(r) < \deg(g)$. Thus, we get $f = q g +r$ where $q:=q^{\prime} + b_n^{-1}a_m T^{m-n}$. 

Now consider a $K$-homogeneous ideal $\mathfrak{a} \unlhd R[T]$. Let $f \in \mathfrak{a} \cap R[T]^+$ have minimal degree $\deg(f) \in \ZZ_{\geq 0}$. For $g \in \mathfrak{a} \cap R[T]^+$ there are $q, r \in R[T]^+$ with $g = q f + r$ and $\deg(r) < \deg(f)$ or $r=0$. Minimality of $\deg(f)$ implies $r=0$. 
\end{proof}

\end{proposition}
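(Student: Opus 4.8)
Condition $(i)$ says precisely that $R$ is a \emph{graded field}: every non-zero $K$-homogeneous element is a unit. So the statement is the graded analogue of the fact that $k[T]$ is a principal ideal ring exactly when $k$ is a field, and the plan is to prove it by a graded Euclidean argument in one direction and by a single test ideal in the other.

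For $(i)\Rightarrow(ii)$ I would first establish division with remainder for $K$-homogeneous polynomials. Given non-zero $f,g\in R[T]^+$ with leading coefficients $a_m,b_n$ and $\deg(f)\geq\deg(g)$, the plan is to subtract $b_n^{-1}a_m T^{\deg(f)-\deg(g)}g$ from $f$ so as to lower its $T$-degree. The one and only place where $(i)$ enters is that $b_n\in R^+\setminus\{0\}\subseteq R^*$, so that $b_n^{-1}$ exists. Iterating (i.e. inducting on $\deg(f)$) yields $f=qg+r$ with $r=0$ or $\deg(r)<\deg(g)$, and throughout I would track $K$-degrees to confirm that each subtracted term, hence $q$ and $r$, is $K$-homogeneous of the forced $K$-degree. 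Given division, for a $K$-homogeneous ideal $\mathfrak a$ I pick $f\in\mathfrak a\cap R[T]^+$ of minimal $T$-degree and divide an arbitrary homogeneous $g\in\mathfrak a$ by it; the remainder $r=g-qf$ again lies in $\mathfrak a$ but has strictly smaller degree, so minimality forces $r=0$ and $g\in\langle f\rangle$, whence $\mathfrak a=\langle f\rangle$.

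For $(ii)\Rightarrow(i)$ I would argue contrapositively. Suppose some non-zero $K$-homogeneous $a\in R$ is a non-unit and consider the $K$-homogeneous ideal $\mathfrak a:=\langle a,T\rangle$. Since $a\in\mathfrak a$ has $T$-degree $0$, the minimal degree occurring in $\mathfrak a$ is $0$. If $(ii)$ held, $\mathfrak a$ would be generated by a single $K$-homogeneous $g$ of $T$-degree $0$, that is, a non-zero constant; but $T\in\langle g\rangle=gR[T]$ forces $g$ to be a unit (compare the coefficient of $T$), whence $\mathfrak a=R[T]$ and $1=pa+qT$ for suitable $p,q\in R[T]$. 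Evaluating at $T=0$ gives $1=p(0)a$, so $a$ is a unit — contradicting the choice of $a$.

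The substantive work sits in $(i)\Rightarrow(ii)$, and the part that actually needs care is the homogeneity bookkeeping in the division step: one must confirm that reducing $f$ by a monomial multiple of $g$ preserves $K$-homogeneity and lands in the correct graded piece, which is exactly what upgrades the familiar degree-in-$T$ argument to the graded setting. The converse, by contrast, is essentially formal once the test ideal $\langle a,T\rangle$ has been singled out, so I expect no real obstacle there.
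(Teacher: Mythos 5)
Your proof of $(i)\Rightarrow(ii)$ is essentially identical to the paper's: the same graded division with remainder, with $(i)$ entering only through the invertibility of the leading coefficient $b_n$, followed by the same minimal-degree argument. The one genuine difference is that you also prove $(ii)\Rightarrow(i)$, which the paper states but silently omits (only the forward direction is needed for Corollary~\ref{cor34}). Your converse via the test ideal $\langle a,T\rangle$ is correct: the ideal is $K$-homogeneous since both $a$ and $T$ are, the minimal $T$-degree in it is $0$, a degree-$0$ generator $g\in R^+$ dividing $T$ must be a unit (compare the coefficient of $T$ in $gh=T$), and evaluating $1=pa+qT$ at $T=0$ then makes $a$ a unit. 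So your write-up is, if anything, more complete than the paper's; the only thing left implicit on your side is the routine $K$-degree bookkeeping in the division step, which you correctly identify and which works out exactly as in the paper.
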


\begin{proposition}
Let $R$ a $K$-graded ring such that every $K$-homogeneous ideal is generated by a single homogeneous element. Then $R$ ist factorially graded. 
\begin{proof}
 Since $R$ is $K$-noetherian, Lemma~\ref{lem:nagata0} reduces the problem to showing that every $K$-irreducible $p \in R$ is $K$-prime. By definition, $\langle p \rangle$ is maximal among all the principal ideals of $K$-homogeneous elements - so in our case among all homogeneous ideals. Thus, in $R \langle p \rangle$ every $K$-homogeneous element is a unit, i.e. no zero divisor. Hence, $p$ is $K$-prime. 
\end{proof}
\end{proposition}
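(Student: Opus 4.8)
The plan is to lean on the reduction provided by the earlier lemmas and then exploit the hypothesis through a quotient argument. First I would observe that the assumption---every $K$-homogeneous ideal being generated by a single homogeneous element---makes $R$ in particular $K$-noetherian, so that Lemma~\ref{lem:nagata0} applies and every non-zero $K$-homogeneous non-unit is a product of $K$-irreducible elements. It therefore suffices to show that every $K$-irreducible $p \in R$ is $K$-prime.

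Second, I would draw out the structural meaning of $K$-irreducibility: the principal ideal $\langle p \rangle$ is maximal among the proper $K$-homogeneous \emph{principal} ideals. For a proper inclusion $\langle p \rangle \subsetneq \langle q \rangle$ forces a factorization $p = q r$ with $r \notin R^*$, and then $K$-irreducibility of $p$ makes $q$ a unit, i.e. $\langle q \rangle = R$. This is the point at which the hypothesis becomes essential: since every $K$-homogeneous ideal is already principal, maximality among principal homogeneous ideals coincides with maximality among \emph{all} proper $K$-homogeneous ideals.

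Third, I would translate $K$-primality into a condition on the quotient. Because $\langle p \rangle$ is $K$-homogeneous, the ring $R / \langle p \rangle$ is $K$-graded, and its $K$-homogeneous ideals correspond to the $K$-homogeneous ideals of $R$ containing $\langle p \rangle$. By the maximality just obtained these are only the zero ideal and the whole ring, so every non-zero $K$-homogeneous element of $R / \langle p \rangle$ generates the unit ideal and is hence a unit---in particular not a zero divisor. Since $p$ being $K$-prime says exactly that $\b g \, \b h = 0$ with $\b g, \b h$ homogeneous forces $\b g = 0$ or $\b h = 0$ in $R / \langle p \rangle$, this completes the argument.

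The step I expect to be the most delicate is the upgrade in the second paragraph: $K$-irreducibility by itself only delivers maximality among \emph{principal} homogeneous ideals, and it is precisely the standing hypothesis that lets one drop the word ``principal''. The remaining work is the standard dictionary identifying $K$-primality of $p$ with the absence of non-zero $K$-homogeneous zero divisors in $R / \langle p \rangle$.
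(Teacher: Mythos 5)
Your proposal is correct and follows essentially the same route as the paper's own proof: reduce via Lemma~\ref{lem:nagata0} to showing $K$-irreducibles are $K$-prime, observe that $K$-irreducibility makes $\langle p \rangle$ maximal among proper principal homogeneous ideals and hence (by the hypothesis) among all proper homogeneous ideals, and conclude that every non-zero homogeneous element of $R/\langle p \rangle$ is a unit. You merely spell out the intermediate steps (the correspondence of homogeneous ideals in the quotient, and the dictionary between $K$-primality and the absence of homogeneous zero divisors) that the paper leaves implicit.
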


\begin{corollary}\label{cor34}
 Let $R$ be a $K$-graded ring. Then $(R^+\setminus \{0\})^{-1}R[T]$ is factorially $K$-graded. 
\end{corollary}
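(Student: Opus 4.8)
The plan is to apply the two preceding Propositions of this section to the localized ring, after observing that the localization in question is itself a polynomial ring over a graded ring in which every nonzero homogeneous element is a unit. First I would set $S := R^+ \setminus \{0\}$ and record the canonical identification $S^{-1}R[T] = R'[T]$, where $R' := S^{-1}R$ carries its localization grading and $T$ keeps the degree $w \in K$ fixed at the beginning of the section. This identification respects the gradings: a $K$-homogeneous fraction $p/s$ with $p = \sum a_i T^i$ and $s \in S$ corresponds to $\sum (a_i/s) T^i \in R'[T]$, and the two notions of $K$-degree agree, with $T = T/1$ still homogeneous of degree $w$. Granting this, it suffices to show that $R'[T]$ is factorially $K$-graded, so that $R'$ and $R'[T]$ take over the role of ``$R$ and $R[T]$ as above''.

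The decisive point is that $R'$ satisfies hypothesis (i) of the first Proposition above, namely $(R')^+ \setminus \{0\} \subseteq (R')^*$. Indeed, a nonzero $K$-homogeneous element of $R'$ has the form $r/s$ with $r \in R^+ \setminus \{0\} = S$ and $s \in S$; since $r \in S$, the fraction $s/r$ is a legitimate element of $R'$ and $(r/s)(s/r) = 1$, so $r/s$ is a unit. I would also note that $R'$ inherits from $R$ the absence of $K$-homogeneous zero divisors: if a product of homogeneous fractions vanishes, clearing denominators by an element of $S$ and using that $R$ has no $K$-homogeneous zero divisors forces one of the factors to be zero.

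With hypothesis (i) verified for $R'$, the first Proposition yields that every $K$-homogeneous ideal of $R'[T]$ is generated by a single $K$-homogeneous element of minimal degree. The second Proposition then applies verbatim to $R'[T]$ and gives that $R'[T]$ is factorially $K$-graded; observe that the hypothesis of that Proposition already entails $K$-noetherianity, since each homogeneous ideal is generated by one element. Combining this with the identification $S^{-1}R[T] = R'[T]$ finishes the proof.

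I do not expect a genuine obstacle here, since the real content has been front-loaded into the two Propositions. The only step requiring any care is the routine but necessary verification that localizing $R[T]$ at the multiplicative system $S \subset R$ produces $(S^{-1}R)[T]$ with matching $K$-grading, so that the hypotheses of those Propositions are literally met by $R'$ in place of $R$; once this is in place the conclusion is immediate.
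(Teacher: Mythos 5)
Your proposal is correct and follows exactly the route the paper intends: identify $(R^+\setminus\{0\})^{-1}R[T]$ with $R'[T]$ where $R'=(R^+\setminus\{0\})^{-1}R$ has all nonzero homogeneous elements invertible, then chain the two preceding Propositions. The paper leaves the corollary without an explicit proof precisely because it is this immediate combination, and your careful verifications (the graded identification and the unit property of $R'$) are the right ones to record.
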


Now let $R$ be factorially $K$-graded and let $P$ be a system of $K$-prime elements such that every $K$-prime element of $R$ is associated to exactly one $p \in P$. For brevity set $R':=(R^+\setminus \{0\})^{-1}R$. We write $\nu_p(f)$ for the minimal multiplicity with which $p \in P$ occurs in the decompositions of the coefficients of $f \in R'[T]^+$. 

\begin{lemma}\label{gauss-lemma}
In the above notation, for any $p \in P$ the multiplicities of two $K$-homogeneous polynomials $f, g \in R'[T]$ satisfy
\[ \nu_p(fg) = \nu_p(f) + \nu_p(g) \text{ .}\]
Furthermore, a polynomial $f \in R[T]^+$ with $\nu_p(f)=0$ for all $p \in P$ is $K$-prime in $R[T]$ if and only if it is $K$-prime in $R'[T]$. 
\begin{proof}
Let $f, g \in R'[T]^+$. The proclaimed equation clearly holds if $f$ or $g$ are constant. So we only treat the case that $\nu_p(f)=\nu_p(g) = 0$ for all $p \in P$. In particular, $f$ and $g$ are polynomials over $R$. Now for all $p \in P$, the classes of $f$ and $g$ in $(R/\langle p \rangle) [T]$ are non-zero and therefore their product $f g$ is non-zero as well, because $R / \langle p \rangle$ and thereby $ (R/\langle p \rangle) [T]$ has no $K$-homogeneous zero divisors. But this is equivalent to $\nu_p(f g)=0$ for all $p \in P$, which we had to show. 

Now, let $f \in R[T]$ satisfy $\nu_f(p)=0$ for all $p \in P$ and let $f$ be $K$-prime in $R'[T]$. If $f$ divides in $R[T]$ the product of two $K$-homogeneous $g, h \in R[T]$, it must divide either $g$ or $h$ in $R'[T]$. So we may assume that there is a $q \in R'[T]$ with $f q = g$. Applying the above, we get $\nu_p(q) = \nu_p(f q) = \nu_p(g) \geq 0$ for all $p \in P$, so $q \in R[T]$ and $f | g$ in $R[T]$. Thus, $f$ is $K$-prime in $R[T]$. We omit the proof of the converse, as it is not needed in the following.
\end{proof}
\end{lemma}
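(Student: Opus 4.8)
The plan is to treat $\nu_p$ as a valuation and argue exactly as in the classical proof of Gauss's lemma, the only new ingredient being the graded analogue of ``$R/\langle p\rangle$ is a domain''. Since $R$ is factorially $K$-graded, every nonzero $K$-homogeneous element of $R$ factors into $K$-primes uniquely up to units and reordering; hence for a constant $c \in (R')^+\setminus\{0\}$, written $c = r/s$ with $r,s \in R^+\setminus\{0\}$, the integer $\nu_p(c) := \nu_p(r)-\nu_p(s)$ is well defined and additive, $\nu_p(cd)=\nu_p(c)+\nu_p(d)$. Only finitely many $p \in P$ have $\nu_p(c)\neq 0$, so the content $c_f := \prod_{p\in P} p^{\nu_p(f)} \in R'$ of any $f \in R'[T]^+$ is a well-defined homogeneous constant, and I will use repeatedly that a nonzero homogeneous $c \in (R')^+$ lies in $R$ if and only if $\nu_p(c)\geq 0$ for all $p$. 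I would prove the multiplicativity statement first, since it is needed for the primality statement.

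For multiplicativity I would first treat the case of a constant factor: if $f=c$ then the coefficients of $cg$ are $c$ times those of $g$, so additivity of $\nu_p$ on constants gives $\nu_p(cg)=\nu_p(c)+\nu_p(g)$ at once. Using this I reduce the general case to primitive polynomials: write $f=c_f f_0$ and $g=c_g g_0$ where $f_0=c_f^{-1}f$ and $g_0=c_g^{-1}g$ have all coefficients of nonnegative $\nu_p$, hence lie in $R[T]^+$, and satisfy $\nu_p(f_0)=\nu_p(g_0)=0$ for all $p$. Since $\nu_p(fg)=\nu_p(c_fc_g)+\nu_p(f_0g_0)=\nu_p(f)+\nu_p(g)+\nu_p(f_0g_0)$, it remains to show $\nu_p(f_0g_0)=0$, i.e.\ that a product of primitive polynomials is primitive. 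Fixing $p$, the condition $\nu_p(f_0)=0$ means some coefficient of $f_0$ is not divisible by $p$, so the image $\overline{f_0}$ in $(R/\langle p\rangle)[T]$ is nonzero, and likewise $\overline{g_0}\neq 0$. As $p$ is $K$-prime, $R/\langle p\rangle$ has no $K$-homogeneous zero divisors, and therefore neither does $(R/\langle p\rangle)[T]$; hence $\overline{f_0\,g_0}=\overline{f_0}\,\overline{g_0}\neq 0$, which is exactly $\nu_p(f_0g_0)=0$.

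For the primality statement, take $f \in R[T]^+$ with $\nu_p(f)=0$ for all $p$; note $f$ is then a non-unit forcing $\deg(f)\geq 1$. If $f$ is $K$-prime in $R'[T]$ and $f \mid gh$ with $g,h \in R[T]^+$, then $f$ divides $gh$ in $R'[T]$, so we may assume $g=fq$ with $q\in R'[T]$; multiplicativity gives $\nu_p(q)=\nu_p(g)-\nu_p(f)=\nu_p(g)\geq 0$ for all $p$, whence $q\in R[T]$ and $f\mid g$ already in $R[T]$, so $f$ is $K$-prime in $R[T]$. For the converse, suppose $f$ is $K$-prime in $R[T]$ and $f\mid GH$ in $R'[T]$; decomposing $G=c_GG_0$, $H=c_HH_0$ into contents and primitive parts and using that $c_Gc_H$ is a unit of $R'$, one gets $f\mid G_0H_0$ in $R'[T]$, where $G_0H_0\in R[T]$ is primitive by multiplicativity. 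Writing $G_0H_0=fq$ gives $\nu_p(q)=0\geq 0$ for all $p$, so $q\in R[T]$ and $f\mid G_0H_0$ in $R[T]$; $K$-primality in $R[T]$ yields $f\mid G_0$ or $f\mid H_0$, hence $f\mid G$ or $f\mid H$ in $R'[T]$.

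The main obstacle is the primitive-times-primitive-is-primitive step, which rests entirely on the absence of $K$-homogeneous zero divisors in $(R/\langle p\rangle)[T]$; everything else is bookkeeping with $\nu_p$. Two supporting points deserve care: the well-definedness of $\nu_p$ on $R'$, which uses \emph{uniqueness} (not merely existence) of $K$-prime factorizations in the factorially $K$-graded ring $R$, and the equivalence ``$\nu_p\geq 0$ for all $p$'' $\Leftrightarrow$ ``coefficients in $R$'', which is precisely what converts divisibility in $R'[T]$ back into divisibility in $R[T]$ in both directions of the primality statement.
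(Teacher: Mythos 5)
Your proposal is correct and follows essentially the same route as the paper: reduce multiplicativity to primitive polynomials via contents, use that $p$ being $K$-prime makes $(R/\langle p\rangle)[T]$ free of $K$-homogeneous zero divisors, and convert divisibility in $R'[T]$ back to $R[T]$ via ``$\nu_p \geq 0$ for all $p$''. You are somewhat more careful than the paper (spelling out well-definedness of $\nu_p$ on $R'$ via uniqueness of $K$-prime factorizations, and supplying the converse primality direction that the paper explicitly omits as unneeded), but these are elaborations of the same argument, not a different approach.
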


\begin{proof}[Proof of Theorem~\ref{thm:homgauss}, ``only if''-direction]
Let $R$ be factorially $K$-graded and $f \in R[T]^+ \setminus R[T]^*$. By Corollary~\ref{cor34}, $R'[T]$ is factorially $K$-graded. Thus, $f$ has a decomposition $f=f_1 \cdots f_m$ with $K$-prime elements $f_i \in R'[T]$. Multiplication with a suitable fraction of $K$-homogeneous elements of $R$ yields $f = c f_1' \cdots f_m'$ where the $f_i'$ lie in $R[T]$ and satisfy $\nu_p(f_i')=0$ for all $p \in P$. By Lemma~\ref{gauss-lemma}, the $f_i'$ are $K$-prime in $R[T]$ and $c \in R$ because $\nu_p(c) = \nu_p(f)\geq 0$ for all $p \in P$. Since the decomposition of $c$ in $R$ is also a decomposition in $R[T]$, the proof is complete. 
\end{proof}

\section{Proof of Theorem~\ref{thm:variation-of-free-part-of-grading}}

\begin{proposition}
Let $R$ be a $K \oplus \ZZ^m$-graded ring without $K \oplus \ZZ^m$-homogeneous zero divisors and $f \in R$ a $K \oplus \ZZ^m$-homogeneous element. Then $f$ is $K \oplus \ZZ^m$-prime if and only if it is $K$-prime. 
\end{proposition}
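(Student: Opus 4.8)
The plan is to reduce both primality notions to zero-divisor conditions on the quotient $\bar R := R/\langle f \rangle$ and then to compare those conditions by a leading-term argument along a total ordering of $\ZZ^m$. Since $f$ is $K \oplus \ZZ^m$-homogeneous it is in particular $K$-homogeneous, so the principal ideal $\langle f \rangle$ is homogeneous for both gradings and $\bar R$ inherits both a $K$-grading and a $K \oplus \ZZ^m$-grading. As already exploited in Lemma~\ref{lem-proof-homgauss-if}, a nonzero homogeneous non-unit $f$ is $K$-prime exactly when $\bar R$ has no $K$-homogeneous zero divisors, and it is $K \oplus \ZZ^m$-prime exactly when $\bar R$ has no $K \oplus \ZZ^m$-homogeneous zero divisors. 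Hence the proposition becomes the statement that $\bar R$ has no $K$-homogeneous zero divisors if and only if it has no $K \oplus \ZZ^m$-homogeneous zero divisors.

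One implication is immediate: every $K \oplus \ZZ^m$-homogeneous element is $K$-homogeneous, so the absence of $K$-homogeneous zero divisors forces the absence of $K \oplus \ZZ^m$-homogeneous ones. This yields the direction ``$f$ is $K$-prime $\Rightarrow$ $f$ is $K \oplus \ZZ^m$-prime'', which one may also see directly: if $f$ is $K$-prime and $f \mid gh$ with $g, h$ both $K \oplus \ZZ^m$-homogeneous, then $g, h$ are in particular $K$-homogeneous, so $f \mid g$ or $f \mid h$.

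The substance lies in the converse. Assuming $\bar R$ has no $K \oplus \ZZ^m$-homogeneous zero divisors, I would show that it has none $K$-homogeneously either. Fix a translation-invariant total order on $\ZZ^m$, say the lexicographic one. Let $\bar g, \bar h \in \bar R$ be nonzero and $K$-homogeneous, and write $\bar g = \sum_i \bar g_i$ and $\bar h = \sum_j \bar h_j$ as sums of their nonzero $K \oplus \ZZ^m$-homogeneous components, whose $\ZZ^m$-degrees $z_i$, respectively $w_j$, are pairwise distinct. Picking indices $i_0, j_0$ with $z_{i_0}$ and $w_{j_0}$ maximal, translation-invariance shows that $z_{i_0} + w_{j_0}$ is the unique maximum among all the degrees $z_i + w_j$. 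Hence the $K \oplus \ZZ^m$-homogeneous component of $\bar g \bar h$ of $\ZZ^m$-degree $z_{i_0} + w_{j_0}$ equals $\bar g_{i_0} \bar h_{j_0}$, which is nonzero by hypothesis; in particular $\bar g \bar h \ne 0$. Thus $\bar R$ has no $K$-homogeneous zero divisors, and translating back, $f$ is $K$-prime.

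The main obstacle is exactly this leading-term step: a product of $K$-homogeneous elements is a sum of many $K \oplus \ZZ^m$-homogeneous terms of the same $K$-degree, and a priori these could cancel. The total order on $\ZZ^m$ is what rescues the argument, singling out a top component that cannot be cancelled; this is the graded analogue of comparing leading coefficients, the same mechanism that made the leading term of a product computable in Section~3. I would also record that the ``non-unit'' requirement is unambiguous across the two gradings, since in a ring without homogeneous zero divisors a homogeneous unit has a homogeneous inverse, so that the hypotheses defining $K$-primality and $K \oplus \ZZ^m$-primality match on the nose.
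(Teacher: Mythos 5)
Your proof is correct, and its engine is the same one the paper uses: the top $\ZZ^m$-component of a product of $K$-homogeneous elements is the product of the top components, and this cannot vanish because there are no $K\oplus\ZZ^m$-homogeneous zero divisors. The packaging, however, differs in two ways that are worth noting. First, the paper argues directly with divisibility: it shows $g_{n_2}h_{m_2}\in\langle f\rangle$, peels off one $\ZZ$-homogeneous summand, and runs an induction on the total number of components of $g$ and $h$; by instead rephrasing both primality conditions as the absence of homogeneous zero divisors in $R/\langle f\rangle$ (the same reformulation the paper itself uses in Lemma~\ref{lem-proof-homgauss-if}), you dispense with that induction entirely --- a single leading-term computation shows the product is nonzero. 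Second, the paper only writes out the case $m=1$, implicitly leaving the general case to iteration of the one-variable statement, whereas your choice of a translation-invariant total (lexicographic) order on $\ZZ^m$ handles arbitrary $m$ in one stroke; the verification that $z_{i_0}+w_{j_0}$ is the \emph{unique} maximum among the $z_i+w_j$ is exactly the point where translation-invariance is needed, and you supply it. Your closing remark that the non-unit condition is grading-independent is harmless but not really necessary, since $R^*$ is the set of units of the ring and does not depend on the grading at all.
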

\begin{proof}
We only show the ``only if''-direction for $m=1$. Let $f$ be $K \oplus \ZZ$-prime. Let $g$ and $h$ be $K$-homogeneous with $f | g h$. Let $g= g_{n_1}+\ldots+g_{n_2}$ and $h = h_{m_1} + \ldots + h_{m_2}$ be the decompositions into $\ZZ$-homogeneous parts. Since $R$ has no $K \oplus \ZZ$-homogeneous zero divisors, $g_{n_2} h_{m_2}$ is the leading term of $g h$, so $\ZZ$-homogeneity of $\mathfrak{a}$ yields $g_{n_2} h_{m_2} \in \langle f \rangle$, so $g_{n_2} \in \langle f \rangle$ or $h_{m_2} \in \langle f \rangle$ by $K \oplus \ZZ$-primality of $f$. Now we have $(g- g_{n_2})h \in \langle f \rangle$ or $g(h-h_{m_2}) \in \langle f \rangle$ and by induction on the added number of $\ZZ$-homogeneous parts of $g$ and $h$, we get $f | g$ or $f|h$.
\end{proof}

\begin{proof}[Proof of Theorem~\ref{thm:variation-of-free-part-of-grading}]
 Let $R$ be factorially $K \oplus \ZZ^m$-graded. By Theorem~\ref{thm:alg-criterion-for-graded-factoriality}, there is a multiplicative system $S$ generated by $K \oplus \ZZ^m$-primes such that $S^{-1}R$ is $\{0_K\} \oplus \ZZ^m$-associated and $(S^{-1}R)_{\{0_K\} \oplus \ZZ^m}$ is factorially graded, i.e. factorial. With respect to the coarsened $K$-grading, this is just the Veronese subalgebra $(S^{-1}R)_{0_K}$. Since the generators of $S$ are $K$-prime by the above proposition and $S^{-1}R$ is clearly $\{0_K\}$-associated, Theorem~\ref{thm:alg-criterion-for-graded-factoriality} implies that $R$ is factorially $K$-graded. 

Conversely, let $R$ be factorially $K$-graded and let $f$ be a non-zero $K \oplus \ZZ^m$-homogeneous non-unit. Then all $K$-prime factors in the decomposition of $f$ are $K \oplus \ZZ$-homogeneous and therefore $K \oplus \ZZ$-prime by the above proposition. 
\end{proof}

\section{Examples}

In the following, let $\KK$ be algebraically closed of characteristic zero and $K$ finitely generated. In order to give examples of factorially graded rings and Cox rings, we first fix the notation for a more explicit version of our criterion for graded factoriality. 
Let $R := \KK[T_1,\ldots,T_m] / \mathfrak{a}$ an integral $K$-graded $\KK$-algebra with $K$-prime generators $\overline{T_i}$ and $R^+ \cap R^* = \KK^*$. Let $Q\colon \ZZ^m \rightarrow K$ denote the map given by $e_i \mapsto \deg_K(\overline{T_i})$. For a free subgroup $K'$ of $K$, we fix a monomorphism $B: \ZZ^n \rightarrow \ZZ^m$ mapping onto $Q^{-1}(K')$ and denote by $\beta \colon \KK[T_1^{\pm 1},\ldots, T_n^{\pm 1}] \rightarrow \KK[T_1^{\pm 1},\ldots, T_m^{\pm 1}]$ the corresponding homomorphism of group algebras which is an isomorphism onto the Veronese subalgebra $\KK[T_1^{\pm 1},\ldots, T_m^{\pm 1}]_{K'}$. Let $g_1,\ldots,g_d$ be $K$-homogeneous generators of $\mathfrak{a}$ and $\mu_j \in \ZZ^m$ with $Q(\mu_j) - \deg_K(g_j) \in K'$. We denote by $h_j$ the unique preimage of $T^{-\mu_j}g_j \in \KK[T_1^{\pm 1},\ldots, T_m^{\pm 1}]_{K'}$ under $\beta$.

\begin{theorem}
 In the above notation, $R$ is factorially $K$-graded if and only if $\KK[T_1^{\pm 1},\ldots, T_n^{\pm 1}] / \langle h_1,\ldots, h_d \rangle$ is factorial. 
Moreover, if one of the above holds and $R$ is normal and almost freely graded, then $R$ is a Cox ring.
\end{theorem}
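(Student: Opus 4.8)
The plan is to invoke Theorem~\ref{thm:alg-criterion-for-graded-factoriality} for the multiplicative system $S$ generated by the generators $\overline{T_i}$ — which are $K$-prime by hypothesis — and afterwards to pass from graded factoriality of the resulting Veronese subalgebra to ordinary factoriality by means of Theorem~\ref{thm:variation-of-free-part-of-grading}. Note first that $R$, being finitely generated over the field $\KK$, is noetherian and hence $K$-noetherian, so Theorem~\ref{thm:alg-criterion-for-graded-factoriality} is applicable.

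The first thing to verify is that $S^{-1}R$ is $K'$-associated. Every $K$-homogeneous element of $S^{-1}R$ has degree in the image of $Q$: this already holds in $R$, where homogeneous elements are $\KK$-combinations of monomials $\overline{T}^\alpha$ of degree $Q(\alpha)$, and inverting the monomials $\overline{T_i}$ introduces no further degrees. Given a $K$-homogeneous $f$ of degree $w = Q(\nu)$, the Laurent monomial $\overline{T}^{-\nu}$ is a unit of $S^{-1}R$ and $\overline{T}^{-\nu}f$ has degree $0 \in K'$; thus $f$ is associated to an element of the $K'$-Veronese subalgebra, proving $K'$-associatedness. Consequently, by Theorem~\ref{thm:alg-criterion-for-graded-factoriality} together with its concluding assertion, $R$ is factorially $K$-graded if and only if $(S^{-1}R)_{K'}$ is factorially $K'$-graded.

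The core step is the identification of this Veronese subalgebra. Put $L := \KK[T_1^{\pm 1},\ldots,T_m^{\pm 1}]$ and $I := \langle g_1,\ldots,g_d\rangle \unlhd L$, so that $S^{-1}R \cong L/I$. As the monomials $\overline{T}^{\mu_j}$ are units, $I = \langle T^{-\mu_1}g_1,\ldots,T^{-\mu_d}g_d\rangle$, and each listed generator is $K$-homogeneous of degree in $K'$. Forming the $K'$-Veronese subalgebra is exact on $K$-graded modules, whence $(L/I)_{K'} = L_{K'}/(I \cap L_{K'})$; and since $I$ is homogeneous with generators already lying in $L_{K'}$, a comparison of degrees shows that $I \cap L_{K'}$ is generated over $L_{K'}$ by the elements $T^{-\mu_j}g_j$. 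Carrying this across the isomorphism $\beta\colon \KK[T_1^{\pm 1},\ldots,T_n^{\pm 1}] \xrightarrow{\ \sim\ } L_{K'}$ and recalling that $h_j = \beta^{-1}(T^{-\mu_j}g_j)$, one obtains a $K'$-graded isomorphism
\[
(S^{-1}R)_{K'} \;\cong\; \KK[T_1^{\pm 1},\ldots,T_n^{\pm 1}]/\langle h_1,\ldots,h_d\rangle,
\]
the $K'$-grading on the right being the one under which $T^a$ has degree $Q(B(a))$. I expect this identification to be the only genuinely delicate point of the argument.

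It remains to remove the grading. The right-hand ring is a domain (a subring of the localization $S^{-1}R$ of the domain $R$) and noetherian (a quotient of a Laurent algebra), and $K'$ is free; hence Theorem~\ref{thm:variation-of-free-part-of-grading}, with $K'$ itself in the role of the free summand and trivial base group, shows it is factorially $K'$-graded if and only if it is factorial. Chaining the displayed equivalences establishes the stated criterion, the freeness of $K'$ being used precisely at this last reduction. For the addendum, once $R$ is factorially $K$-graded it meets all four conditions of the algebraic characterization of Cox rings recalled in the introduction: normality and almost-free gradedness hold by assumption, factorial gradedness has just been shown, and the homogeneous units are trivial by the standing hypothesis $R^+ \cap R^* = \KK^*$. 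Therefore $R$ is a Cox ring.
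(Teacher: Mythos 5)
Your proof is correct and follows essentially the same route as the paper: localize at the multiplicative system generated by the $\overline{T_i}$, apply Theorem~\ref{thm:alg-criterion-for-graded-factoriality}, and identify $(S^{-1}R)_{K'}$ with $\KK[T_1^{\pm 1},\ldots,T_n^{\pm 1}]/\langle h_1,\ldots,h_d\rangle$ via $\beta$. You additionally spell out two steps the paper leaves implicit — the verification that $S^{-1}R$ is $K'$-associated and the final passage from factorial $K'$-gradedness to factoriality via freeness of $K'$ (using Theorem~\ref{thm:variation-of-free-part-of-grading}) — both of which are handled correctly.
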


\begin{proof}
 Let $S$ be the multiplicative system given by products of the $\overline{T_i}$. Then $S^{-1}R$ is $K'$-associated and by Theorem~\ref{thm:alg-criterion-for-graded-factoriality} we only have to show that $(S^{-1}R)_{K'}$ is isomorphic to $\KK[T_1^{\pm 1},\ldots, T_n^{\pm 1}] / \langle h_1,\ldots, h_d \rangle$ which we see as follows:
 \begin{align*}
 (S^{-1}R)_{K'} &\cong \left(\KK[T_1^{\pm 1},\ldots, T_m^{\pm 1}] / \mathfrak{a}_T\right)_{K'} \\
&\cong \KK[T_1^{\pm 1},\ldots, T_m^{\pm 1}]_{K'} / (\mathfrak{a}_T \cap \KK[T_1^{\pm 1},\ldots, T_m^{\pm 1}]_{K'}) \\
&\cong \KK[T_1^{\pm 1},\ldots, T_n^{\pm 1}] / \beta^{-1}(\mathfrak{a}_T \cap \KK[T_1^{\pm 1},\ldots, T_m^{\pm 1}]_{K'}) .
\end{align*}
Here, $\mathfrak{a}_T$ denotes the localization of $\mathfrak{a}$ by $T_1 \cdots T_m$. By construction, the $h_j$ generate $\beta^{-1}(\mathfrak{a}_T \cap \KK[T_1^{\pm 1},\ldots, T_m^{\pm 1}]_{K'})$, which concludes the proof. 
\end{proof}

\begin{corollary}\label{cor:aff-lin-criterion}
 If in the above notation those exponent vectors of $T^{-\mu_j} g_j$ which are non-zero form a basis of a primitive sublattice of $Q^{-1}(K')$, then $R$ is factorially $K$-graded. 
Moreover, if $R$ is also normal and almost freely graded, then $R$ is a Cox ring. 
\begin{proof}
 After a unimodular transformation of the variables, the $h_j$ are affine linear polynomials. So $\KK[T_1^{\pm 1},\ldots, T_n^{\pm 1}]/\langle h_1,\ldots, h_d \rangle$ is as a graded ring isomorphic to some Laurent algebra $\KK[T_1^{\pm 1},\ldots, T_s^{\pm 1}]$, in particular it is factorially $K'$-graded, and we can apply the above theorem.  
\end{proof}
\end{corollary}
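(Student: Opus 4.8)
The plan is to reduce everything, by the theorem above, to checking that the Laurent quotient $\KK[T_1^{\pm 1},\ldots,T_n^{\pm 1}]/\langle h_1,\ldots,h_d \rangle$ is factorial, and then to establish this by linearizing the relations. First I would translate the hypothesis into a statement about the $h_j$. By construction each $T^{-\mu_j}g_j$ lies in the Veronese subalgebra $\KK[T_1^{\pm 1},\ldots,T_m^{\pm 1}]_{K'}$, so all of its exponent vectors lie in $Q^{-1}(K')$ and, since $\beta$ is the group-algebra homomorphism attached to the monomorphism $B\colon\ZZ^n\to\ZZ^m$ onto $Q^{-1}(K')$, each such vector has a unique $B$-preimage in $\ZZ^n$ that is precisely an exponent vector of $h_j=\beta^{-1}(T^{-\mu_j}g_j)$; as $B$ is injective, a preimage vanishes exactly when the original vector does. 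Hence the non-zero exponent vectors of the $h_j$ are the $B$-preimages of the non-zero exponent vectors of the $T^{-\mu_j}g_j$, and by hypothesis they form a basis of $L':=B^{-1}(L)\subseteq\ZZ^n$, where $L\subseteq Q^{-1}(K')$ is the primitive sublattice of the statement. Since $B$ identifies $\ZZ^n$ with $Q^{-1}(K')$, the lattice $L'$ is again primitive, i.e. saturated, in $\ZZ^n$.

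The decisive step uses this primitivity. Because $L'$ is saturated, a basis $v_1,\ldots,v_s$ of $L'$ extends to a basis $v_1,\ldots,v_n$ of $\ZZ^n$, and the unimodular automorphism of $\ZZ^n$ (an element of $\GL_n(\ZZ)$) sending $v_i\mapsto e_i$ induces a $\KK$-algebra automorphism of $\KK[T_1^{\pm 1},\ldots,T_n^{\pm 1}]$ carrying each Laurent monomial $T^{v_i}$ to the single variable $T_i$. Under it every exponent vector of $h_j$, being $0$ or one of the $v_i$, goes to $0$ or to some $e_i$, so each $h_j$ becomes an affine linear polynomial with monomials among $\{1,T_1,\ldots,T_s\}$; in particular $T_{s+1},\ldots,T_n$ occur in none of the relations. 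I use this map only as a ring isomorphism, so no compatibility with the grading is needed.

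It then remains to see that such an affine linear quotient is factorial. I would split off the variables absent from the relations, writing the quotient as the Laurent extension
\[
\left(\KK[T_1^{\pm 1},\ldots,T_s^{\pm 1}]/\langle h_1,\ldots,h_d\rangle\right)[T_{s+1}^{\pm 1},\ldots,T_n^{\pm 1}],
\]
and then apply Gaussian elimination to the affine linear forms $h_1,\ldots,h_d$: after a $\KK$-linear change among the relations a set of pivot variables is expressed as affine linear combinations of the remaining free ones, and substituting these, all involved elements being units of the Laurent ring, exhibits the inner quotient as a localization of the polynomial ring in the free variables. Consistency of the system is automatic, since by the theorem above this inner quotient is isomorphic to $(S^{-1}R)_{K'}$, which is a domain because $R$ is integral, hence non-zero. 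A localization of a polynomial ring is factorial and a Laurent extension of a factorial ring is again factorial, so the whole quotient is factorial, and the theorem above yields that $R$ is factorially $K$-graded. For the supplement, $R$ is a finitely generated algebra over the finitely generated group $K$ that is normal and almost freely graded by assumption, factorially $K$-graded by the above, and has only trivial homogeneous units since $R^+\cap R^*=\KK^*$; the characterization of Cox rings recalled in the introduction then identifies $R$ as a Cox ring.

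The step I expect to be the crux is the passage from the primitivity hypothesis to the coordinate change: saturation of $L'$ is exactly what allows extending $v_1,\ldots,v_s$ to a lattice basis of $\ZZ^n$ and hence realizing $T^{v_i}\mapsto T_i$ by a single unimodular automorphism that linearizes all of the $h_j$ simultaneously. Once the relations are affine linear, the remaining factoriality assertion is a routine localization argument, and the Cox ring statement is an immediate appeal to the algebraic characterization.
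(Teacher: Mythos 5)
Your proposal is correct and follows essentially the same route as the paper: use primitivity to extend the exponent vectors to a lattice basis, apply the corresponding unimodular coordinate change to make the $h_j$ affine linear, conclude factoriality of the Laurent quotient, and invoke the preceding theorem. Your description of the resulting ring as a localization of a polynomial ring is in fact slightly more careful than the paper's claim that it is ``isomorphic to some Laurent algebra,'' but both yield the needed factoriality.
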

 
\begin{example}
 Consider the normal ring $R=\CC[T_1,\ldots, T_4] /\langle  g \rangle$ with $g = T_1^{m_1} + \ldots + T_4^{m_4}$. These rings were discussed by Storch, who in \cite{St} gave a criterion which of these are factorial and which are not, depending on the numbers $\gcd(m_i, m_j)$. However, all of them are factorially graded as we will now see: Let  $K:= \ZZ^4 / M$  with $M := \rm{span}_{\ZZ}((-m_1, m_2, 0, 0), (-m_1, 0, m_3, 0), (-m_1, 0, 0, m_4))$ and let $T_i$ have the degree $\deg_K(T_i):= e_i + M$. Then $g$ is $K$-homogeneous, so $R$ is $K$-graded. The $\overline{T_i} \in R$ are prime, i.e. $K$-prime. Since the non-zero exponent vectors of $T_1^{-m_1}g$ by definition form a basis of $\ker(Q)$, $R$ is factorially $K$-graded by Corollary~\ref{cor:aff-lin-criterion}. 
\end{example}
Still, not all of the above rings are Cox rings yet, but using the homogeneous Gau{\ss} Theorem we do get Cox rings:

\begin{example}
In the notation of 5.3, let $m_1 = m_2 = 2$ and $m_3 = m_4 = 3$. According to \cite{St}, $R$ is no UFD. The grading by $K = \ZZ \oplus \ZZ / 6 \ZZ$ is given via $\deg_K(\overline{T_1}) = (3, \overline{0}), \deg_K(\overline{T_2})= (3,\overline{3}), \deg_K(\overline{T_3}) = (2, \overline{0})$ and $\deg_K(\overline{T_4})= (2, \overline{2})$. Since $R$ allows a positive $\ZZ$-grading, it has only trivial units, and the $\overline{T_i}$ are pairwise non-associated. But no three of their degrees generate $K$, so $R$ is no Cox ring. However, by adding another variable $T_5$ with degree e.g. $(2 ,\overline{1})$ we do get a Cox ring $R[T_5]$ since the homogeneous Gau\ss\ Theorem ensures that graded factoriality is preserved. 
\end{example}

Next, we consider an example of an application of the Coarsening Theorem:

\begin{example}
 Let $R:=\KK[T_1,\ldots,T_5] / \langle g \rangle$ with $g:=T_1^2 + T_2^2T_3^3 + T_4^3 + T_5^4$. By Serre's criterion, $R$ is normal and the $\overline{T_i}$ are prime. Also, $R$ has only trivial units. $R$ is graded by $\tilde{K}:=\ZZ^2\oplus \ZZ/2\ZZ$ respectively $K:= \ZZ/2\ZZ$ via the matrices
\[\tilde{Q}:=\left(\begin{array}{c c c c c}
 0 & -3 & 2 & 0& 0 \\
 6 & 6 & 0 & 4& 3 \\
 \overline{1} & \overline{1} &\overline{0} &\overline{0} &\overline{0}
\end{array}\right), \quad 
Q:=\left(\begin{array}{c c c c c}
 6 & 3 & 2 & 4& 3 \\
 \overline{1} & \overline{1} &\overline{0} &\overline{0} &\overline{0}
\end{array}\right), \]
where the $i$-th column denotes $\deg_{\tilde{K}}(\overline{T_i})$ respectively $\deg_K(\overline{T_i})$. Since the non-zero exponents of $T_1^{-2}g$ form a basis of $\ker(\tilde{Q})$, $R$ is factorially $\tilde{K}$-graded by Corollary~\ref{cor:aff-lin-criterion}. However, the $\tilde{K}$-grading is not almost free, so as a $\tilde{K}$-graded ring, $R$ is no Cox ring. But by Theorem~\ref{thm:variation-of-free-part-of-grading}, $R$ is also factorially $K$-graded, and this grading clearly is almost free. So as a $K$-graded ring, $R$ is a Cox ring. 
\end{example}

\begin{remark}
In general, we may use Corollary~\ref{cor:aff-lin-criterion} to show that $R$ is factorially graded, if the monomials $q_{i,j}$ occuring in the defining relations $g_i$ each comprise a variable that occurs in no other $q_{l,k}$. The Cox rings calculated in \cite{De, HaTs, HaSu} are all members of this class of rings. The following example shows that the above method is not restricted to these cases: 
\end{remark}

\begin{example}
 Let $R=\CC[T_1,\ldots,T_4] / \langle g \rangle$ with $g = T_1^7 + T_1^2T_2T_4 + T_2^3T_3 + T_4^2$ be graded by $K:=\ZZ$ via $\deg_K(\overline{T_1})=2, \deg_K(\overline{T_2}) = 3, \deg_K(\overline{T_3}) = 5$ and $\deg_K(\overline{T_4}) = 7$, and let $Q: \ZZ^4\rightarrow K$ denote the corresponding linear map. Since the grading is pointed, $R$ has only trivial units. The $\overline{T_i}$ are prime and pairwise non-associated. Since those exponent vectors of $T_2^{-3}T_3^{-1} g$ which are non-zero form a basis of $\ker(Q)$, $R$ is factorial by Corollary~\ref{cor:aff-lin-criterion}. Hence, $R$ is a Cox ring. 
\end{example}

\end{document}